\newtheorem{theorem}{Theorem}[section]
\newtheorem{lemma}{Lemma}[section]
\newtheorem{remark}{Remark}[section]
\newtheorem{proposition}{Proposition}[section]
\numberwithin{equation}{section}
\newcommand{\lbl}[1]{\label{#1}}
\newcommand{\be}{\begin{equation}}
\newcommand{\ee}{\end{equation}}
\newcommand\bes{\begin{eqnarray}} \newcommand\ees{\end{eqnarray}}
\newcommand{\bess}{\begin{eqnarray*}}
\newcommand{\eess}{\end{eqnarray*}}
\newcommand{\bbbb}{\left\{\begin{aligned}}
\newcommand{\nnnn}{\end{aligned}\right.}
\newcommand{\bea}{\begin{align*}}
\newcommand{\eea}{\end{align*}}
\newcommand\ep{\varepsilon}
\newcommand\kk{\left}
\newcommand\rr{\right}
\newcommand\dd{\displaystyle}
\newcommand\dx{{\rm d}x}
\newcommand\dy{{\rm d}y}
\newcommand\yy{\infty}
\newcommand\ol{\overline}
\newcommand\ud{\underline}
\begin{document}\thispagestyle{empty}
\setlength{\baselineskip}{16pt}
\begin{center}
 {\LARGE\bf Longtime behaviors of an epidemic model}\\[2mm]
 {\LARGE\bf with nonlocal diffusions and a free boundary: spreading-vanishing dichotomy\footnote{This work was supported by NSFC Grant 12301247}}\\[4mm]
 {\Large Xueping Li}\\[0.5mm]
{School of Mathematics and Information Science, Zhengzhou University of Light Industry, Zhengzhou, 450002, China}\\[2.5mm]
{\Large Lei Li\footnote{Corresponding author. {\sl E-mail}: 15838059810@163.com}}\\[0.5mm]
{College of Science, Henan University of Technology, Zhengzhou, 450001, China}
\end{center}

\date{\today}

\begin{quote}
\noindent{\bf Abstract.}We propose a nonlocal epidemic model whose spatial domain evolves over time and is represented by $[0,h(t)]$ with $h(t)$ standing for the spreading front of epidemic. It is assumed that the agents can cross the fixed boundary $x=0$, but they will die immediately if they do it, which implies that the area $(-\yy,0)$ is a hostile environment for the agents. We first show that this model is well posed, then prove that the longtime behaviors are governed by a spreading-vanishing dichotomy and finally give some criteria determining spreading and vanishing. Particularly, we obtain the asymptotical behaviors of the principal eigenvalue of a cooperative system with nonlocal diffusions without assuming the related nonlocal operator is self-adjoint, and the steady state problem of such cooperative system on half space $[0,\yy)$ is studied in detail.

\textbf{Keywords}: Nonlocal diffusion; epidemic model; free boundary; principal eigenvalue; spreading and vanishing.

\textbf{AMS Subject Classification (2000)}: 35K57, 35R09,
35R20, 35R35, 92D25
\end{quote}

\section{Introduction}\pagestyle{myheadings}
\renewcommand{\thethm}{\Alph{thm}}
{\setlength\arraycolsep{2pt}
We consider the following epidemic model with nonlocal diffusions and one free boundary
  \bes\left\{\begin{aligned}\label{1.1}
&u_t=d_1\int_0^{h(t)}\!J_1(x-y)u(t,y)\dy-d_1u-au+H(v), \hspace{2mm} t>0, ~ x\in[0,h(t)),\\
&v_t=d_2\int_0^{h(t)}\!J_2(x-y)v(t,y)\dy-d_2v-bv+G(u), \hspace{3mm} t>0, ~ x\in[0,h(t)),\\
&u(t,h(t))=v(t,h(t))=0, \hspace{53mm} t>0,\\
&h'(t)=\int_0^{h(t)}\!\int_{h(t)}^{\yy}\big[\mu_1 J_1(x-y)u(t,x)
+\mu_2 J_2(x-y)v(t,x)\big]\dy\dx, \;\; t>0,\\
&h(0)=h_0>0,~ u(0,x)=u_0(x), ~ v(0,x)=v_0(x),\; ~ x\in[0,h_0],\\
 \end{aligned}\right.
 \ees
 where all parameters are positive constants. The kernel functions $J_1$ and $J_2$ satisfies
 \begin{enumerate}
\item[{\bf(J)}]$J\in C(\mathbb{R})\cap L^{\yy}(\mathbb{R})$, $J(x)\ge0$, $J(0)>0$, $J$ is even, $\dd\int_{\mathbb{R}}J(x)\dx=1$.
 \end{enumerate}
The initial value functions $u_0$ and $v_0$ are in $C([0,h_0])$, positive in $[0,h_0)$ and vanish at $x=h_0$.
 The nonlinear reaction terms $H$ and $G$ satisfy
 \begin{enumerate}
\item[{\bf(H)}]\; $H,G\in C^2([0,\yy))$, $H(0)=G(0)=0$, $H'(z),G'(z)>0$ in $[0,\yy)$, $H''(z), G''(z)<0$ in $(0,\yy)$, and $G(H(\hat z)/a)<b\hat{z}$ for some $\hat{z}>0$.
 \end{enumerate}

 The motivation for proposing \eqref{1.1} comes from the existing works \cite{HY,DL,CDLL,LLW}. To model the spread of an oral-faecal transmitted epidemic, Hsu and Yang \cite{HY} proposed the following PDE system
\bes\label{1.2}
\left\{\!\begin{aligned}
&u_{t}=d_1\Delta u-au+H(v), & &t>0,~x\in\mathbb{R},\\
&v_{t}=d_2\Delta v-bv+G(u), & &t>0,~x\in\mathbb{R},
\end{aligned}\right.
 \ees
where all parameters are positive, and $H(v)$ and $G(u)$ satisfy {\bf (H)}. In models \eqref{1.1} and \eqref{1.2}, $u(t,x)$ and $v(t,x)$ stand for the spatial
concentrations of the bacteria and the infective human population, respectively, at time $t$ and location $x$ in the one dimensional habitat; $-au$ represents the natural death rate of the bacterial population and $H(v)$ denotes the contribution of the infective human to the growth rate of the bacteria; $-bv$ is the fatality rate of the infective human population and $G(u)$ is the infection rate of human population; $d_1$ and $d_2$, respectively, stand for the diffusion rate of bacteria and infective human.

Define
  \bess
  \mathcal{R}_0=\frac{H'(0)G'(0)}{ab}.
 \eess
It was proved in \cite{HY} that when $\mathcal{R}_0>1$, there exists a $c_*>0$ such that \eqref{1.2} has a positive monotone travelling wave solution, which is unique up to translation, if and only if $c\ge c_*$. Moreover, the dynamics of the corresponding ODE system with positive initial value is governed by $\mathcal{R}_0$. More precisely, when $\mathcal{R}_0<1$, $(0,0)$ is globally asymptotically stable; while when $\mathcal{R}_0>1$, there exists a unique positive equilibrium $(u^*,v^*)$ which is uniquely given by
  \bes\label{1.3}
  au^*=H(v^*), ~ ~ bv^*=G(u^*),\ees
and is globally asymptotically stable.

Especially, for model \eqref{1.2} with nonnegative and non-trivial initial value functions, by the maximum principle, we know that the unique solution $(u,v)$ is positive for $t>0$ and $x\in\mathbb{R}$, which implies that the epidemic can spread instantly throughout the whole space. It doesn't seem reasonable. One way to overcome this shortcoming of model \eqref{1.2} is to consider the epidemic model on the domain whose boundary is unknown and varies over time, instead of the fixed boundary domain or the whole space. Du and Lin \cite{DL} incorporated the Stefan boundary condition into the model arising from ecology and proposed the following problem
 \bes\label{1.4}
 \left\{\!\begin{aligned}
&u_{t}=d\Delta u+u(a-bu), & &t>0,~x\in(g(t),h(t)),\\
&u(t,g(t))=u(t,h(t))=0, & &t>0,\\
&g'(t)=-\mu u_x(t,g(t)),~ ~ h'(t)=-\mu u_x(t,h(t)), & & t>0,\\
&-g(0)=h(0)=h_0>0, ~ u(0,x)=\hat{u}_0(x), & & x\in[-h_0,h_0].
\end{aligned}\right.
 \ees
According to \eqref{1.4}, the spatial domain $(g(t),h(t))$ is unknown and evolves over time. The boundary conditions $g'(t)=-\mu u_x(t,g(t))$ and $h'(t)=-\mu u_x(t,h(t))$ are usually referred to as the Stefan boundary condition. Du and Lin found that the dynamics of \eqref{1.4} is governed by a spreading-vanishing dichotomy, a new spreading phenomena resulting from reaction-diffusion model. Inspired by this work, in our series of work \cite{LL,LLW1,LLW2}, we introduced the Stefan boundary condition into \eqref{1.2}, and obtained a spreading-vanishing dichotomy, some criteria for spreading and vanishing, as well as the sharp spreading profiles, which are expected to be proved for model \eqref{1.1} in our future succession work.

As above, the dispersals in models \eqref{1.2} and \eqref{1.4} are approximated by random diffusion $\Delta u$, also known as local diffusion. Recently, it has been increasingly recognized that nonlocal diffusion may be better to describe the spatial movements since it can capture local as well as long-distance dispersal. A commonly used nonlocal diffusion operator takes the form of
\bes\label{1.5}
d\int_{\mathbb{R}^N}J(|x-y|)u(t,y)\dy-du,\ees
where $J$ is the kernel function and $d$ is the diffusion coefficient. The biological interpretation of \eqref{1.5} and its properties can be seen from, for example, \cite{AMRT,KLS,BCV,ZhangW24}.
Using operator \eqref{1.5} or its variation to model the spreading phenomenon from ecology and epidemiology has attracted much attention, and many related works have emerged over past decades. For example, please refer to \cite{Ya, Gar, AC, XLR}. Cao et al \cite{CDLL} and Cort{\'a}zar et al \cite{CQW} independently replaced random diffusion $\Delta u$ in \eqref{1.4} with nonlocal diffusion operator \eqref{1.5}, and considered the following problem
\bes\left\{\begin{aligned}\label{1.6}
&u_t=d\int_{g(t)}^{h(t)}\!\!J(x-y)u(t,y)\dy-du+f(u), & &t>0,~x\in(g(t),h(t)),\\[1mm]
&u(t,x)=0,& &t>0, ~ x\notin(g(t),h(t))\\
&h'(t)=\mu\int_{g(t)}^{h(t)}\!\!\int_{h(t)}^{\infty}
J(x-y)u(t,x)\dy\dx,& &t>0,\\[1mm]
&g'(t)=-\mu\int_{g(t)}^{h(t)}\!\!\int_{-\infty}^{g(t)}
J(x-y)u(t,x)\dy\dx,& &t>0,\\[1mm]
&h(0)=-g(0)=h_0>0,\;\; u(0,x)=u_0(x),& &|x|\le h_0,
 \end{aligned}\right.
 \ees
 where kernel $J$ satisfies {\bf (J)} and $u_0\in C([-h_0,h_0]$, $u_0(\pm h_0)=0<u_0(x)$ in $(-h_0,h_0)$.
The nonlinear term $f$ is of the Fisher-KPP type in \cite{CDLL} and $f\equiv0$ in \cite{CQW}.  The authors in \cite{CDLL} showed that similar to \eqref{1.4}, the dynamics of \eqref{1.6} is also governed by a spreading-vanishing dichotomy. However, when spreading occurs, it was proved in \cite{DLZ} that the spreading speed of \eqref{1.6} is finite if and only if $\int_0^{\yy}xJ(x)\dx<\yy$, which is much different from \eqref{1.4} since the spreading speed of \eqref{1.4} is always finite (see e.g. \cite{DL,DMZ,DLou}).

Later on, the following variant of \eqref{1.6} was proposed by Li et al \cite{LLW}
\bes\label{1.7}
\left\{\begin{aligned}
&u_t=d\dd\int_0^{h(t)}J(x-y)u(t,y)\dy-du+f(u), && t>0,~0\le x<h(t),\\
&u(t,h(t))=0, && t>0,\\
&h'(t)=\mu\dd\int_0^{h(t)}\int_{h(t)}^{\infty}
J(x-y)u(t,x)\dy\dx, && t>0,\\
&h(0)=h_0,\;\; u(0,x)=u_0(x), &&0\le x\le h_0,
\end{aligned}\right.
 \ees
 where $u_0\in C([0,h_0])$ and $u_0(h_0)=0<u_0(x)$ in $[0,h_0)$. This model is derived from the assumption that when the species jumps to the area $(-\yy,0)$, they will die immediately,  which implies that the area $(-\yy,0)$ is a hostile environment for the species. Thus the fixed boundary $x=0$ is subject to a nonlocal analog of the usual homogeneous Dirichlet boundary condition. For model \eqref{1.7}, the spreading-vanishing dichotomy and the estimate for spreading speed were derived in \cite{LLW}.

 Motivated by the above works, we consider \eqref{1.1} and aim at knowing as much as possible about its dynamics. This paper is the first part of our investigation for \eqref{1.1}, and involves the well-posedness, a spreading-vanishing dichotomy and some criteria for spreading and vanishing. Before stating our main results, we would like mention some recent work, which is a small sample of free boundary problems in ecology and epidemiology. For the ones with random diffusion, please refer to \cite{DLou} for monostable, bistable and combustion nonlinear term, \cite{WND} for the epidemic model, \cite{DWu,WQW} for the L-V competition model, \cite{DDL} for the case in the periodic setting, \cite{ZZh} for the blow-up result, \cite{DMZ} for the sharp asymptotical spreading profiles and \cite{Du22} for a detailed survey. For the ones with nonlocal diffusion, please refer to a series of works of Du and Ni \cite{DN2,DN3,DN4,DN5} for spreading speed in homogeneous environment, \cite{ZLZ,PLL,ZhangW23} for the case in periodic environment, \cite{DWZ22, LLW24, LW24} for the competition, prey-predator and mutualist models and \cite{ZZLD,ZLD,DLNZ,WD1,WD2,DNW,NV} for the epidemic model.

The first main result involves the well-posedness of \eqref{1.1}.

\begin{theorem}[Global existence and uniqueness]\label{t1.1}Problem \eqref{1.1} has a unique global solution $(u,v,h)$. Moreover, $(u,v)\in [C([0,\yy)\times[0,h(t)])]^2$, $h\in C^1([0,\yy))$, $h'(t)>0$, $0< u(t,x)\le M_1$ and $0< v(t,x)\le M_2$ in $[0,\yy)\times[0,h(t))$ with some $M_1,M_2>0$.
\end{theorem}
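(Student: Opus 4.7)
The plan is to adapt the framework developed for scalar nonlocal free boundary problems (see \cite{CDLL,LLW}) to the cooperative two-species setting of \eqref{1.1}. The argument naturally splits into three stages: a local existence result via the contraction mapping principle, uniform a priori bounds excluding blow-up, and extension to a global solution together with the positivity and monotonicity claims.

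For local existence on a short interval $[0,T_0]$, I fix a Lipschitz front $h$ with $h(0)=h_0$ and extend $u(t,\cdot),v(t,\cdot)$ by zero outside $[0,h(t))$. For each fixed $x$, the first two equations of \eqref{1.1} become linear ODEs in $t$ whose source terms involve integrals of $u,v$ over the moving domain; applying Duhamel's formula and writing the front as
\begin{equation*}
h(t)=h_0+\int_0^t\!\int_0^{h(s)}\!\int_{h(s)}^{\yy}\!\big[\mu_1 J_1(x-y)u(s,x)+\mu_2 J_2(x-y)v(s,x)\big]\,{\rm d}y\,{\rm d}x\,{\rm d}s,
\end{equation*}
I obtain a nonlinear operator on the complete metric space of triples $(u,v,h)$ with $u,v$ bounded by twice the initial data and $h$ Lipschitz with prescribed constant. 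Since $J_1,J_2\in L^\yy$ and $H,G\in C^2$ are Lipschitz on bounded sets by hypothesis (H), this operator is a strict contraction for $T_0$ sufficiently small, which yields a unique local solution.

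For the a priori bounds I exploit the cooperative structure together with the crucial condition $G(H(\hat z)/a)<b\hat z$ in (H). Taking $M_2\ge\max\{\hat z,\|v_0\|_\yy\}$ large enough and $M_1=H(M_2)/a\ge\|u_0\|_\yy$, the concavity of $G$ together with the assumed inequality gives $aM_1\ge H(M_2)$ and $bM_2\ge G(M_1)$, so $(M_1,M_2)$ is a constant supersolution of the cooperative nonlocal system. A two-component maximum principle on the moving domain then yields $u\le M_1$ and $v\le M_2$, while applying a scalar nonlocal maximum principle to each equation (the couplings $H(v),G(u)$ being nonnegative whenever $u,v\ge 0$) gives strict positivity $u,v>0$ in $[0,h(t))$. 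Combined with $J_i(0)>0$ and the continuity of $J_i$, this forces $h'(t)>0$ strictly. Finally, $h'(t)\le(\mu_1 M_1+\mu_2 M_2)h(t)$ rules out blow-up of $h$ in finite time, and a standard continuation argument extends the local solution to all $t\ge 0$.

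The principal technical obstacle is the two-component maximum principle: because of the coupling through $H(v)$ and $G(u)$, the scalar arguments of \cite{CDLL,LLW} do not apply directly. I would set $U=M_1-u,\;V=M_2-v$ and use the Duhamel representation to show that $(U,V)$ satisfies a system of nonlocal integral inequalities with nonnegative initial data and cooperative (nonnegative) off-diagonal couplings; arguing by contradiction at the first time $t_*$ where $\inf_x\min\{U(t_*,x),V(t_*,x)\}=0$ and invoking the strict positivity of the kernels at $0$ produces the required contradiction. The same type of componentwise argument, with the signs reversed, delivers the strict positivity of $(u,v)$.
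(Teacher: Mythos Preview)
Your approach is correct and is precisely the route the paper takes: the authors give no self-contained proof of Theorem~\ref{t1.1} but simply write ``by following the similar lines as in \cite{NV,DN8}\ldots the modifications are obvious, we omit the details.'' Those references carry out exactly the contraction-mapping argument for local existence, the cooperative two-component maximum principle on the moving domain, and the continuation based on uniform bounds that you outline. Your identification of the vector maximum principle as the main new ingredient over the scalar case \cite{CDLL,LLW} is on target, and your sketch of it (via $U=M_1-u$, $V=M_2-v$ and a first-touching argument) is the standard one.

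One small technical point in your a~priori bound: you set $M_1=H(M_2)/a$ and then require $M_1\ge\|u_0\|_\yy$ by taking $M_2$ large. Hypothesis {\bf(H)} does not force $H$ to be unbounded (e.g.\ $H(z)=z/(1+z)$ satisfies it), so if $\sup H<a\|u_0\|_\yy$ this particular choice fails. The fix is immediate: with $\hat u:=H(\hat z)/a$ the pair $(\hat u,\hat z)$ satisfies $a\hat u=H(\hat z)$ and $b\hat z>G(\hat u)$ strictly, and by concavity $(K\hat u,K\hat z)$ with $K\ge 1$ still satisfies $aK\hat u\ge H(K\hat z)$ and $bK\hat z>G(K\hat u)$; choosing $K$ large enough to dominate the initial data gives the desired constant supersolution $(M_1,M_2)$. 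With this adjustment your argument goes through.
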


Next we show that the longtime behaviors of \eqref{1.1} are governed by a spreading-vanishing dichotomy.

\begin{theorem}[Spreading-vanishing dichotomy]\label{t1.2} Let $(u,v,h)$ be the unique solution of \eqref{1.1}. Then one of the following alternatives must happen.\vspace{-1.5mm}
\begin{enumerate}[$(1)$]
\item \underline{Spreading:} necessarily $\mathcal{R}_0>1$, $ h_\yy:=\lim_{t\to\yy}h(t)=\yy$,
\[\lim_{t\to\yy}(u(t,x),v(t,x))=(U(x),V(x)) ~ ~ {\rm ~  in ~ }C_{\rm loc}([0,\yy)),\]
 where $(U(x),V(x))$ is the unique bounded positive solution of  \eqref{2.4}.\vspace{-1.5mm}
\item \underline{Vanishing:} $h_{\yy}<\yy$, $\lambda_p(h_{\yy})\le0$ and $\lim_{t\to\yy}\|u(t,\cdot)+v(t,\cdot)\|_{C([0,h(t)])}=0$, where $\lambda_p(h_{\yy})$ is the principal eigenvalue of \eqref{2.1}.
\end{enumerate}
\end{theorem}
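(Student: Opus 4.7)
The plan is to split into the two cases $h_\infty<\infty$ (vanishing) and $h_\infty=\infty$ (spreading), using the fact that $h'(t)>0$ from Theorem \ref{t1.1} ensures the monotone limit $h_\infty\in(h_0,\infty]$ exists. The main analytic tools will be: the monotonicity and continuity of the principal eigenvalue $\lambda_p(\ell)$ of \eqref{2.1} in the domain length $\ell$; the concavity assumption in \textbf{(H)} giving $H(v)\le H'(0)v$ and $G(u)\le G'(0)u$ with strict inequality for $v,u>0$; and a comparison principle for the cooperative system on moving domains, which should be available from the well-posedness machinery behind Theorem \ref{t1.1}.

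In the vanishing case $h_\infty<\infty$, I first prove $\lambda_p(h_\infty)\le 0$ by contradiction. Assuming $\lambda_p(h_\infty)>0$, continuity of $\lambda_p$ gives some $\ell\in(h_0,h_\infty)$ with $\lambda_p(\ell)>0$; then for all large $t$ one has $h(t)>\ell$, and a sufficiently small multiple of the principal eigenfunction on $[0,\ell]$ serves as a time-independent sub-solution to the cooperative system (with the linear bounds for $H,G$ at $0$ absorbing the nonlinearity on a small range). Comparison yields $\liminf_{t\to\infty}(u,v)(t,x)>0$ uniformly on $[0,\ell]$, which via the free boundary integral forces $h'(t)\ge c_0>0$ eventually, contradicting $h_\infty<\infty$. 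Having $\lambda_p(h_\infty)\le 0$, I then show $u+v\to 0$ by an $\omega$-limit argument: the solution being bounded and the kernels continuous, any sub-sequential limit $(u_\infty,v_\infty)$ in $C_{\rm loc}([0,h_\infty])$ is a non-negative steady state of the cooperative system on $[0,h_\infty]$ vanishing at $x=h_\infty$. If $(u_\infty,v_\infty)\not\equiv 0$, the strict concavity $H(v_\infty)<H'(0)v_\infty$ and $G(u_\infty)<G'(0)u_\infty$ where positive makes $(u_\infty,v_\infty)$ a strict sub-solution of the linear eigenvalue problem at eigenvalue zero, forcing $\lambda_p(h_\infty)>0$---a contradiction.

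In the spreading case $h_\infty=\infty$, I first rule out $\mathcal{R}_0\le 1$. If $\mathcal{R}_0\le 1$, the spatially constant ODE system with large initial data is an upper solution (the nonlocal diffusion annihilates constants), so $(u,v)\to(0,0)$ uniformly. Integrating each equation in $x$ over $[0,h(t)]$ and applying Fubini to the nonlocal term (noting that $u(t,h(t))h'(t)=0$) yields $\frac{d}{dt}(U,V)^\top\le A(U,V)^\top$ with $U(t)=\int_0^{h(t)}u$, $V(t)=\int_0^{h(t)}v$, and $A$ the $2\times 2$ matrix whose largest eigenvalue is $\le 0$ precisely when $\mathcal{R}_0\le 1$. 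Combined with $h'(t)\le \mu_1 U(t)+\mu_2 V(t)$, this integrability makes $h_\infty<\infty$, contradiction. Hence $\mathcal{R}_0>1$. For the convergence $(u,v)\to(U,V)$ in $C_{\rm loc}([0,\infty))$, I set up a two-sided sandwich: the positive constant equilibrium $(u^*,v^*)$ from \eqref{1.3} is a time-independent upper solution, giving $\limsup_{t\to\infty}(u,v)(t,x)\le(u^*,v^*)$; for the lower bound I pick any $L$ with $\lambda_p(L)>0$ (which exists since $\mathcal{R}_0>1$ implies $\lim_{\ell\to\infty}\lambda_p(\ell)>0$, this limit being the ODE eigenvalue of the linearization at $0$) and a small multiple of the principal eigenfunction on $[0,L]$ as a stationary sub-solution. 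Passing to the $\omega$-limit and using the assumed uniqueness of the bounded positive solution of \eqref{2.4} on $[0,\infty)$, any limit point must equal $(U,V)$.

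The main obstacle will be the two-sided iteration in the spreading case: the upper comparison with $(u^*,v^*)$ is routine, but producing a lower bound that actually matches $(U,V)$ in $C_{\rm loc}$ requires gluing the eigenfunction sub-solutions on $[0,L]$ for increasing $L$ and then passing $L\to\infty$ in the $\omega$-limit, which hinges essentially on the uniqueness of the bounded positive steady state of \eqref{2.4} on the half-line---a uniqueness that must be established independently in the steady-state analysis promised in the abstract. A secondary delicate point is the boundary case $\lambda_p(h_\infty)=0$ in the vanishing argument, where the strict concavity of $H,G$ at positive arguments must be exploited to exclude a nontrivial non-negative steady state even when the principal eigenvalue is exactly zero.
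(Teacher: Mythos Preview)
Your case split and the contradiction argument for $\lambda_p(h_\infty)\le 0$ match the paper. However, in both the vanishing and spreading branches you rely on $\omega$-limit arguments---extracting subsequential $C_{\rm loc}$ limits of $(u(t,\cdot),v(t,\cdot))$ and asserting they are steady states---and this is where a genuine gap lies. Nonlocal diffusion has no regularizing effect, so precompactness of the orbit $\{(u(t,\cdot),v(t,\cdot)):t\ge 0\}$ in $C$ is not automatic: the equation gives $u_t$ bounded, but spatial equicontinuity of $u(t,\cdot)$ cannot be read off without already knowing that of $v(t,\cdot)$, and vice versa. Even granting compactness, you would still need to justify that every $\omega$-limit point is an equilibrium, which for a system is not immediate.

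The paper avoids both difficulties by never extracting limits of the free-boundary solution itself. For vanishing, once $\lambda_p(h_\infty)\le 0$ is known, it compares $(u,v)$ from above with the solution of the fixed-domain evolution problem \eqref{2.9} on $[0,h_\infty]$ with large constant data; Proposition~\ref{p2.3} says this auxiliary solution tends to $(0,0)$, hence so does $(u,v)$ by comparison. For spreading, it compares from above with the half-line evolution problem \eqref{2.10} (which converges to $(U,V)$, again Proposition~\ref{p2.3}), giving $\limsup\le(U,V)$ directly---not merely $\le(u^*,v^*)$ as in your sketch; and from below with \eqref{2.9} on $[0,L]$ for large $L$ (converging to the unique positive steady state $(u_L,v_L)$), then lets $L\to\infty$ using $(u_L,v_L)\to(U,V)$ locally uniformly, which is exactly the monotone construction of $(U,V)$ in Proposition~\ref{p2.2}. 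Proposition~\ref{p2.3} itself is proved by monotone iteration for the cooperative structure, so the entire chain is compactness-free. A smaller point: your inequality $\frac{d}{dt}(\mathcal{U},\mathcal{V})^T\le A(\mathcal{U},\mathcal{V})^T$ for the spatial integrals does not yield $\int_0^\infty(\mathcal{U}+\mathcal{V})\,dt<\infty$ in the borderline case $\mathcal{R}_0=1$ (where $\gamma_A=0$); the paper (Lemma~\ref{l3.3}) instead takes the left-eigenvector combination $u+\frac{H'(0)}{b}v$ and retains the nonlocal diffusion loss term, bounding it below by a positive multiple of $h'(t)$, which controls $h_\infty$ directly without integrability of the masses.
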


Our final result gives a relatively complete criteria for spreading and vanishing.

\begin{theorem}[Criteria for spreading and vanishing]\label{t1.3} Let $(u,v,h)$ be the unique solution of \eqref{1.1}. Then the following results hold.\vspace{-1.5mm}
\begin{enumerate}[$(1)$]
\item If $\mathcal{R}_0\le1$, then vanishing happens.\vspace{-1.5mm}
\item If $\mathcal{R}_*\ge1$, i.e.,
\[\mathcal{R}_*:=\frac{H'(0)G'(0)}{(a+d_1)(b+d_2)}\ge1,\]
then spreading occurs.
  \vspace{-1.5mm}
\item Assume $\mathcal{R}_*<1<\mathcal{R}_0$ and fix all parameters but except for $h_0$ and $\mu_i$ for $i=1,2$. We can find a unique $\ell^*>0$ such that\vspace{-1.5mm}
\begin{enumerate}
\item[{\rm (3a)}] if $h_0\ge\ell^*$, then spreading happens;
\item[{\rm (3b)}] if $h_0<\ell^*$, then the following statements hold:
\begin{enumerate}\vspace{-1.5mm}
\item[{\rm (3b$_1$)}] there exists $\underline{\mu}>0$ such that vanishing happens when $\mu_1+\mu_2\le\underline{\mu}$; and there exists a $\bar{\mu}_1>0$ $(\bar{\mu}_2>0)$ which is independent of $\mu_2$\,$(\mu_1)$ such that spreading happens when  $\mu_1\ge\bar{\mu}_1$\,$(\mu_2\ge\bar{\mu}_2)$;
\item[{\rm (3b$_2$)}] if $\mu_2=f(\mu_1)$ where $f\in C([0,\yy))$, is strictly increasing, $f(0)=0$ and $\lim_{s\to\yy}f(s)=\yy$, then there exists a unique $\mu^*_1>0$ such that spreading happens if and only if $\mu_1>\mu^*_1$. \vspace{-1.5mm}
    \end{enumerate}
    \end{enumerate}
    \end{enumerate}
\end{theorem}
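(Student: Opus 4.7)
The plan is to reduce each item to properties of the principal eigenvalue $\lambda_p(\ell)$ of problem \eqref{2.1} and then invoke the dichotomy in Theorem \ref{t1.2}. The three facts I will establish in the earlier section on cooperative nonlocal eigenvalue problems are: (a) the map $\ell\mapsto\lambda_p(\ell)$ is continuous and strictly increasing on $(0,\yy)$; (b) $\lim_{\ell\to 0^+}\lambda_p(\ell)$ equals the principal eigenvalue of the matrix with entries $-a-d_1,\,H'(0),\,G'(0),\,-b-d_2$, which is positive if and only if $\mathcal{R}_*>1$; and (c) $\lim_{\ell\to\yy}\lambda_p(\ell)$ equals the principal eigenvalue of the matrix with entries $-a,\,H'(0),\,G'(0),\,-b$, which is positive if and only if $\mathcal{R}_0>1$. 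Part (1) then follows at once from the ``necessarily $\mathcal{R}_0>1$'' clause of the spreading alternative in Theorem \ref{t1.2}. Part (2) follows because $\mathcal{R}_*\ge 1$ together with (a)--(b) yields $\lambda_p(\ell)>0$ for every $\ell>0$, so $\lambda_p(h_\yy)>0$ whenever $h_\yy<\yy$, contradicting the vanishing alternative. For Part (3), when $\mathcal{R}_*<1<\mathcal{R}_0$ the intermediate value theorem combined with (a)--(c) produces a unique $\ell^*>0$ with $\lambda_p(\ell^*)=0$. Part (3a) is then immediate: since $h'(t)>0$ we have $h_\yy>h_0\ge\ell^*$, so $\lambda_p(h_\yy)>0$, ruling out vanishing.

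For Part (3b$_1$) with $h_0<\ell^*$, the small-$\mu$ half is handled by an upper solution. Fix $\ell\in(h_0,\ell^*)$ so that $\lambda_p(\ell)<0$ with positive principal eigenpair $(\phi,\psi)$ on $[0,\ell]$, and set $\bar h(t)=\ell(1-\delta e^{-\gamma t})$, $\bar u(t,x)=Ke^{-\gamma t}\phi(x)$, $\bar v(t,x)=cKe^{-\gamma t}\psi(x)$ with $\gamma\in(0,|\lambda_p(\ell)|)$ and constants $K,c,\delta>0$ chosen so that $(\bar u,\bar v)(0,\cdot)\ge (u_0,v_0)$ on $[0,h_0]$ and the two integrodifferential inequalities hold (using $H(v)\le H'(0)v$, $G(u)\le G'(0)u$ from \textbf{(H)} together with the eigenvalue identity). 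The remaining condition $\bar h'(t)\ge\int_0^{\bar h(t)}\!\!\int_{\bar h(t)}^{\yy}[\mu_1 J_1\bar u+\mu_2 J_2\bar v]\dy\dx$ reduces to an explicit bound $\mu_1+\mu_2\le\underline\mu$, and a comparison principle (to be proved alongside Theorem \ref{t1.1}) yields $h(t)\le\bar h(t)<\ell^*$, whence vanishing. For the large-$\mu_1$ half (with the threshold independent of $\mu_2$), I would discard the $\mu_2$ contribution from $h'(t)$ and construct a lower solution whose $u$-component admits a positive lower bound on a fixed subinterval of $[0,h_0]$ that is independent of $\mu_1,\mu_2$; this lower-bounds the double integral and forces $h(T)>\ell^*$ at some finite $T$ once $\mu_1>\bar\mu_1$, after which Part (3a) applied to the time-shifted problem delivers spreading. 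The argument for $\mu_2\ge\bar\mu_2$ is symmetric.

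For Part (3b$_2$) the first step is a monotone dependence: if $\mu_i\le\mu_i'$ for $i=1,2$, then $h(t;\mu_1,\mu_2)\le h(t;\mu_1',\mu_2')$ for all $t\ge 0$, which follows from the comparison principle since the integrand of $h'(t)$ is nondecreasing in $\mu_1,\mu_2$. With $\mu_2=f(\mu_1)$ strictly increasing in $\mu_1$, the spreading/vanishing status is monotone in the single parameter $\mu_1$, so $\Sigma:=\{\mu_1>0:\text{spreading occurs}\}$ is upward closed in $(0,\yy)$. By (3b$_1$) together with $f(0)=0$ and $\lim_{s\to\yy}f(s)=\yy$, both $\Sigma$ and its complement are nonempty, and a brief continuous-dependence argument shows $\Sigma$ is open, so $\mu_1^*:=\inf\Sigma$ is the unique threshold. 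The main technical obstacle throughout is the trio (a)--(c) of eigenvalue properties: because the operator in \eqref{2.1} is not self-adjoint (distinct kernels $J_1\neq J_2$ and unequal off-diagonal coefficients $H'(0)\neq G'(0)$ in general), Rayleigh-quotient variational characterizations are unavailable, and I will rely instead on Krein--Rutman positivity methods together with careful comparison against the matrix eigenvalues in the singular limits $\ell\to 0^+$ and $\ell\to\yy$.
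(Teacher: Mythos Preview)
Your overall strategy matches the paper's: reduce everything to the monotone, continuous eigenvalue map $\ell\mapsto\lambda_p(\ell)$ together with the dichotomy. Parts (2), (3a), and the small-$\mu$ half of (3b$_1$) are handled exactly as in the paper (Lemmas \ref{l3.4}--\ref{l3.6}); for large $\mu_1$ in (3b$_1$) and for (3b$_2$) the paper likewise compares with the $\mu_2=0$ problem and runs a monotonicity/open-set threshold argument, citing \cite{DN8} and \cite{CDLL} for the remaining details, so your sketches are at the same level of completeness.

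The one place your plan is circular is Part (1). You invoke the clause ``necessarily $\mathcal{R}_0>1$'' from Theorem \ref{t1.2}, but in the paper that clause is itself proved via Lemma \ref{l3.3}, which \emph{is} Part (1); see the first line of the proof of Lemma \ref{l3.2}. What is actually needed is an independent argument that $\mathcal{R}_0\le1$ forces $h_\infty<\infty$. The paper obtains this by a direct $L^1$ estimate: integrate the combination $u+\tfrac{H'(0)}{b}v$ over $[0,h(t)]$, use $H(v)\le H'(0)v$, $G(u)\le G'(0)u$ together with $\mathcal{R}_0\le1$ to make the total reaction contribution nonpositive, and observe that the nonlocal boundary loss dominates $-c\,h'(t)$ for an explicit $c>0$; integrating in $t$ then bounds $h_\infty$ explicitly in terms of $h_0$ and the initial data. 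You should supply this (or an equivalent) argument rather than appeal to Theorem \ref{t1.2}.

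One small point on your upper solution in (3b$_1$): writing $\bar u=Ke^{-\gamma t}\phi$, $\bar v=cKe^{-\gamma t}\psi$ with a free scalar $c$ destroys the eigenvalue identity for $(\phi,\psi)$ unless $c=1$; the paper simply takes $(\bar u,\bar v)=Me^{-\delta t}(\phi_1,\phi_2)$ with the principal eigenpair used as is.
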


\begin{remark}\label{r1.1} When spreading occurs, the spreading speed of \eqref{1.1} will be discussed in a separate work. It is expected that the accelerated spreading will happen if kernel functions $J_1$ and $J_2$ violate a threshold condition. Moreover, for some special kernel functions, such as algebraic decay kernels, it will be proved that the rate of accelerated spreading is closely related to the behaviors of kernel functions $J_1$ and $J_2$ near infinity.

Moreover, we are very interested in the asymptotical behaviors of solution component $(u,v)$ of \eqref{1.1} near the spreading front $h(t)$, which will bring some deep insights for the propagation of epidemic. The related results for \eqref{1.4} or L-V competition system with local diffusions have been obtained in \cite{DMZ} and \cite{WQW}, respectively. However, for models \eqref{1.1}, \eqref{1.6} and \eqref{1.7}, until now there is no work to provide some results.
\end{remark}

This paper is arranged as follows. Section 2 mainly involves a principal eigenvalue problem of a cooperative system with nonlocal diffusions and a steady state problem on half space. Particularly, in contrast to \cite{NV}, we give the exact asymptotical behaviors of principal eigenvalue without assuming that the nonlocal operator $\mathcal{L}$ defined in \eqref{2.1} is self-adjoint; we also obtain a rather complete understanding for the unique bounded positive solution of a steady state problem on half space $[0,\yy)$. In Section 3, by using the results derived in Section 2 and some comparison arguments, we study the dynamics of \eqref{1.1} including  a spreading-vanishing and some criteria for spreading and vanishing.

\section{An eigenvalue problem and a steady state problem associated to \eqref{1.1}}\lbl{s2}
{\setlength\arraycolsep{2pt}

This section involves a principle eigenvalue problem and a steady state problem that are crucial for our discussion on the dynamics of \eqref{1.1}.

For any $l_2>l_1$, we consider the following eigenvalue problem
 \bes\label{2.1}
 \mathcal{L}[\phi](x):=\mathcal{P}[\phi](x)+H(x)\phi(x)=\lambda\phi(x), ~ ~x\in[l_1,l_2],\ees
where $\phi=(\phi_1,\phi_2)^T$,
 \[\mathcal{P}[\phi](x)=\begin{pmatrix}
 d_1\dd\int_{l_1}^{l_2}J_1(x-y)\phi_1(y)\dy \\
 d_2\dd\int_{l_1}^{l_2}J_2(x-y)\phi_2(y)\dy
 \end{pmatrix},~ ~ B=\begin{pmatrix}
  -d_1-a & H'(0) \\
   G'(0) & -d_2-b
  \end{pmatrix}.\]}
For clarity, we make the following notations
 \bess
E&=&[L^2([0,l])]^2, ~ \langle\phi,\psi\rangle=\sum_{i=1}^2\int_0^l\phi_i(x)\psi_i(x)\dx, ~  \|\phi\|_2=\sqrt{\langle\phi,\phi\rangle}, ~ X=[C([0,l])]^2,\\
X^+&=&\{\phi\in X: \phi_1\ge0, \phi_2\ge0 ~ {\rm in ~ }[0,l]\}, ~ X^{++}=\{\phi\in X: \phi_1>0, \phi_2>0 ~ {\rm in ~ }[0,l]\}.
 \eess
Let
\[A=\begin{pmatrix}
  -a & H'(0) \\
   G'(0) & -b
    \end{pmatrix}.\]
Direct computations show there exist $\gamma_A, \gamma_B\in\mathbb{R}$, $\theta_A>0$ and $\theta_B>0$ satisfying
\bes\left\{\begin{aligned}\label{2.2}
&\gamma_A=\frac{-a-b+\sqrt{(a-b)^2
+4H'(0)G'(0)}}2,\\[1mm]
&\gamma_B=\frac{-a-d_1-b-d_2
+\sqrt{(a+d_1-b-d_2)^2+4H'(0)G'(0)}}2,\\[1mm] &\theta_A=\frac{H'(0)}{\gamma_A+a},\;\;
\theta_B=\frac{H'(0)}{\gamma_B+d_1+a}, ~ (\gamma_AI-A)(\theta_A,1)^T=0, ~ ~ (\gamma_BI-B)(\theta_B,1)^T=0.
    \end{aligned}\right.\quad
    \ees
    Moreover, $\gamma_A>\max\{-a,-b\}$ and $\gamma_B>\max\{-d_1-a,-d_2-b\}$. Then we define a value
  \[\lambda_p=\inf\{\lambda\in\mathbb{R}: \mathcal{L}[\phi](x)\le\lambda\phi(x) ~ {\rm in } ~ [l_1,l_2] { \rm ~ for ~ some ~ } \phi\in X^{++}\}.\vspace{-1.5mm}\]
  Clearly, $\lambda_p$ is well-defined.
    To discuss the asymptotical behaviors of principal eigenvalue, we need the following two lemmas.
    \begin{lemma}\label{l2.1}Let $\lambda_1$ be an eigenvalue of \eqref{2.1} with its corresponding eigenfunction belonging to $X^{++}$. Then the following statements are valid.\vspace{-1.5mm}
 \begin{enumerate}[$(1)$]
\item If there exist $\phi=(\phi_1,\phi_2)^T\in X$ with $\phi_1,\phi_2\ge,\not\equiv0$ and $\lambda\in\mathbb{R}$ such that $\mathcal{L}[\phi]\le\lambda\phi$, then $\lambda_1\le\lambda$. Moreover, $\lambda_1=\lambda$ only if $\mathcal{L}[\phi]=\lambda\phi$.\vspace{-1.5mm}
\item If there exist $\phi=(\phi_1,\phi_2)^T\in X^+\setminus\{(0,0)\}$ and $\lambda\in\mathbb{R}$ such that $\mathcal{L}[\phi]\ge\lambda\phi$, then $\lambda_1\ge\lambda$. Moreover, $\lambda_1=\lambda$ only if $\mathcal{L}[\phi]=\lambda\phi$.\vspace{-1.5mm}
   \end{enumerate}
   \end{lemma}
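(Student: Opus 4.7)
The plan is to run a sliding-scaling argument against the positive eigenfunction $\psi=(\psi_1,\psi_2)^T\in X^{++}$ satisfying $\mathcal{L}[\psi]=\lambda_1\psi$, exploiting that the coefficient matrix is cooperative (off-diagonal entries $H'(0),G'(0)>0$) and that each kernel $J_i$ is nonnegative with $J_i(0)>0$.

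For part (2), I would set $\tau_*=\inf\{\tau>0:\tau\psi\ge\phi\text{ componentwise on }[l_1,l_2]\}$. Since $\psi$ is continuous and strictly positive on the compact interval while $\phi$ is continuous and bounded, one has $\tau_*\in(0,\infty)$, and there exist $i_0\in\{1,2\}$ and $x_0\in[l_1,l_2]$ at which $w:=\tau_*\psi-\phi\ge 0$ vanishes in its $i_0$-th component. Applying $\mathcal{L}$ and using $\mathcal{L}[\phi]\ge\lambda\phi$ gives
\[
\mathcal{L}[w]\le\tau_*\lambda_1\psi-\lambda\phi=\lambda_1 w+(\lambda_1-\lambda)\phi.
\]
Evaluating the $i_0$-th component at $x_0$, the diagonal term $B_{i_0 i_0}w_{i_0}(x_0)$ vanishes, so the left side reduces to $d_{i_0}\int J_{i_0}(x_0-y)w_{i_0}(y)\,dy+B_{i_0,3-i_0}w_{3-i_0}(x_0)\ge 0$, while the right side equals $(\lambda_1-\lambda)\phi_{i_0}(x_0)$ with $\phi_{i_0}(x_0)=\tau_*\psi_{i_0}(x_0)>0$. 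This forces $\lambda_1\ge\lambda$.

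For part (1), the symmetric sliding $\tau^*=\sup\{\tau>0:\phi\ge\tau\psi\}$ requires strict positivity of $\phi$. I would first bootstrap: if $\phi_{i_0}(x^*)=0$ somewhere, then the $i_0$-th component of $\mathcal{L}[\phi]\le\lambda\phi$ at $x^*$ collapses to
\[
d_{i_0}\int_{l_1}^{l_2}J_{i_0}(x^*-y)\phi_{i_0}(y)\,dy+B_{i_0,3-i_0}\,\phi_{3-i_0}(x^*)\le 0,
\]
forcing both nonnegative summands to vanish. The positivity of $J_{i_0}$ near zero together with continuity then propagates $\phi_{i_0}\equiv 0$ on a neighborhood of $x^*$, and repeating the collapse at each such point simultaneously gives $\phi_{3-i_0}\equiv 0$ on the same neighborhood. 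A connectedness argument on $[l_1,l_2]$ extends this to $\phi\equiv 0$, contradicting $\phi\not\equiv 0$. Hence $\phi\in X^{++}$, and the upward sliding argument symmetric to the one in (2) yields $\lambda_1\le\lambda$.

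For the equality assertions, if $\lambda=\lambda_1$ then every intermediate inequality above must be an equality; in particular the $i_0$-th equation at $x_0$ forces $w_{3-i_0}(x_0)=0$ and $\int J_{i_0}(x_0-y)w_{i_0}(y)\,dy=0$, and the same propagation/connectedness scheme yields $w\equiv 0$, so $\phi$ is a scalar multiple of $\psi$ and $\mathcal{L}[\phi]=\lambda\phi$. The main obstacle I anticipate is this repeated propagation step, used both in the positivity bootstrap of part (1) and in the equality case: converting pointwise vanishing into vanishing on a neighborhood relies on $J_i(0)>0$ combined with continuity, the cooperative coupling is essential to transfer vanishing between components, and only then does connectedness of $[l_1,l_2]$ close the argument — careful bookkeeping is needed to make sure the chain through both components remains simultaneous rather than component-by-component.
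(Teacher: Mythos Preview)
Your sliding/touching-point argument is correct and is precisely the standard route for such cooperative nonlocal systems; the paper itself omits the proof, merely citing \cite[Lemma~2.2]{DN8}, where the same scheme is carried out. Your bookkeeping for the positivity bootstrap in part~(1) and the equality case is sound: the key observation $Z_1=Z_2$ (zero sets coincide and are simultaneously open and closed) is exactly what the cooperative coupling $B_{i_0,3-i_0}>0$ together with $J_{i_0}(0)>0$ delivers, so the propagation does remain simultaneous across both components as you anticipated.
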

   \begin{proof}
     Since this lemma can be proved by the similar ways as in \cite[Lemma 2.2]{DN8}, the details are omitted here.
   \end{proof}

   \begin{lemma}[{\cite[Lemma 2.9]{DN3}}]\label{l2.2}Assume that $l>0$ and kernel function $J$ satisfies {\bf (J)}. Define $\xi(x)=l-|x|$.
   Then for any small $\ep>0$, there exists a $l_{\ep}>0$ depending only on $J$ and $\ep$ such that if $l\ge l_{\ep}$, then
   \[\int_{-l}^{l}J(x-y)\xi(y)\dy\ge(1-\ep)\xi(x) ~ ~ {\rm for ~ }x\in[-l,l].\]
   \end{lemma}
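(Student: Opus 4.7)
The plan is to reduce by symmetry to $x\in[0,l]$ (both $\xi$ and $J$ are even), then run three tailored estimates on sub-intervals. First I would fix $R=R(\varepsilon)>0$ with $\int_{-R}^R J(z)\,dz\ge 1-\varepsilon/2$, which exists since $J$ is a probability density, and take $l_\varepsilon:=2R/\varepsilon$ so that $l\ge l_\varepsilon$ also forces $l\ge 2R$.

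For the interior region $x\in[R,l-R]$, one has $[x-R,x+R]\subset[0,l]$, so $\xi(y)=l-y$ on the entire window. Restricting the integral there and substituting $z=x-y$ gives
\[
\int_{x-R}^{x+R}J(x-y)(l-y)\,dy=\int_{-R}^R J(z)(\xi(x)+z)\,dz=\xi(x)\int_{-R}^R J(z)\,dz\ge (1-\varepsilon/2)\xi(x),
\]
since the odd moment $\int_{-R}^R J(z)\,z\,dz$ vanishes by evenness of $J$. For the center region $x\in[0,R]$, where $\xi(x)\ge l-R$ is of order $l$, the crude bound $\xi(y)\ge \xi(x)-|y-x|\ge \xi(x)-R$ on $[x-R,x+R]$ yields $\int J\xi\ge (\xi(x)-R)(1-\varepsilon/2)$, and the threshold $l\ge 2R/\varepsilon$ is chosen precisely so that this exceeds $(1-\varepsilon)\xi(x)$.

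The boundary region $x\in[l-R,l]$ is where I expect the main obstacle: here $\xi(x)\le R$, the window $[x-R,x+R]$ overflows the right endpoint, and the naive symmetric-truncation estimate only delivers $\sim\xi(x)/2$. The fix is to keep the asymmetric restriction $y\in[x-R,l]$ (where $y\ge 0$ since $l\ge 2R$) and substitute $z=x-y$ to get
\[
\int_{-\xi(x)}^R J(z)(\xi(x)+z)\,dz=\xi(x)\int_{-\xi(x)}^R J(z)\,dz+\int_{-\xi(x)}^R J(z)\,z\,dz.
\]
Using evenness of $J$ to rewrite $\int_{-\xi(x)}^R J=\int_{-R}^R J-\int_{\xi(x)}^R J$ and $\int_{-\xi(x)}^R J(z)\,z\,dz=\int_{\xi(x)}^R J(z)\,z\,dz$, this collapses to
\[
\xi(x)\int_{-R}^R J(z)\,dz+\int_{\xi(x)}^R J(z)(z-\xi(x))\,dz\ge (1-\varepsilon/2)\xi(x)\ge (1-\varepsilon)\xi(x),
\]
the remainder being non-negative since $z-\xi(x)\ge 0$ on the range. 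The key insight is that the mass "lost" to the boundary, $\int_{\xi(x)}^R J\,dz$, is compensated exactly by a first-moment tail through the shift $z-\xi(x)$, so no decay assumption on $J$ beyond $\int_{\mathbb{R}}J=1$ is needed.
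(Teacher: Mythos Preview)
Your proof is correct. The paper itself does not supply a proof of this lemma --- it is quoted verbatim from \cite[Lemma 2.9]{DN3} --- so there is no in-paper argument to compare against. Your three-region decomposition with the cut-off radius $R$ chosen via $\int_{-R}^R J\ge 1-\varepsilon/2$ is clean, and in particular the boundary region $x\in[l-R,l]$ is handled neatly: the algebraic identity
\[
\int_{-\xi(x)}^R J(z)(\xi(x)+z)\,dz=\xi(x)\int_{-R}^R J(z)\,dz+\int_{\xi(x)}^R J(z)(z-\xi(x))\,dz
\]
(which you derive by splitting off the symmetric part and using the evenness of $J$) shows the deficit from truncation at $y=l$ is exactly absorbed by a non-negative first-moment tail, so no further smallness is needed there. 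The threshold $l_\varepsilon=2R/\varepsilon$ is also sharp for your Region~2 estimate: the requirement $(\xi(x)-R)(1-\varepsilon/2)\ge(1-\varepsilon)\xi(x)$ reduces to $\xi(x)\ge R(2-\varepsilon)/\varepsilon$, and $\xi(x)\ge l-R\ge 2R/\varepsilon-R$ gives exactly this. One small point: you should state explicitly that $\varepsilon\le 1$ so that $l_\varepsilon=2R/\varepsilon\ge 2R$, which you use to guarantee $x-R\ge 0$ in Region~3 and $[x-R,x+R]\subset[-l,l]$ in Region~2; this is implicit in ``small $\varepsilon$'' but worth recording.
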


Now we are in the position to study the eigenvalue problem $\mathcal{L}[\phi]=\lambda\phi$.
It is well known that $\lambda$ is a principal eigenvalue if it is simple and its corresponding eigenfunction $\phi$ is in $X^{++}$.

\begin{proposition}\label{p2.1} Let $\mathcal{L}$ be defined as above. Then the following statements are valid.\vspace{-1.5mm}
 \begin{enumerate}[$(1)$]
 \item $\lambda_{p}$ is an eigenvalue of operator $\mathcal{L}$ with a corresponding eigenfunction $\phi_p\in X^{++}$.
 \item The algebraic multiplicity of $\lambda_p$ is equal to one. Namely, $\lambda_p$ is simple.\vspace{-1.5mm}
 \item If there exists an eigenpair $(\lambda,\phi)$ of $\mathcal{L}$ with $\phi\in X^+\setminus\{(0,0)\}$, then $\lambda=\lambda_p$ and $\phi$ is a positive constant multiple of $\phi_p$.\vspace{-1.5mm}
 \item To stress the dependence of $\lambda_p$ on the length of $[l_1,l_2]$, we rewrite $\lambda_p$ as $\lambda_p(l_1,l_2)$. Then $\lambda_p(l_1,l_2)$ is strictly increasing in $l_2-l_1$ and continuous for all $l_2>l_1$. Moreover, $\lambda_p(l_1,l_2)\to\gamma_A$ as $l_2-l_1\to\yy$, and $\lambda_p(l_1,l_2)\to\gamma_B$ as $l_2-l_1\to0$, where $\gamma_A$ and $\gamma_B$ are given by \eqref{2.2}.
 \end{enumerate}
\end{proposition}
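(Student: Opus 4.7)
I would handle the four claims in sequence, leveraging Lemma \ref{l2.1}, Lemma \ref{l2.2}, and the algebraic identities in \eqref{2.2}.

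For part (1), the plan is a Krein--Rutman argument. The integral operator $\mathcal{P}$ is compact on $X$ while $B$ is a constant Metzler matrix whose largest real eigenvalue is $\gamma_B$; hence for $\lambda>\gamma_B$ the matrix $\lambda I-B$ has entrywise nonnegative inverse, and $\mathcal{T}_\lambda:=(\lambda I-B)^{-1}\mathcal{P}$ is a compact positive operator on $X^+$. Krein--Rutman provides its spectral radius $r(\lambda)$ as an eigenvalue with eigenfunction in $X^{++}$, strict positivity propagating thanks to $J_i(0)>0$. The map $\lambda\mapsto r(\lambda)$ is continuous and strictly decreasing with $r(\lambda)\to 0$ as $\lambda\to\infty$ and $r(\lambda)\to\infty$ as $\lambda\searrow\gamma_B$, so a unique $\lambda_p>\gamma_B$ satisfies $r(\lambda_p)=1$, and the corresponding eigenfunction $\phi_p\in X^{++}$ is the desired one. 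Coincidence with the variational infimum is then a direct consequence of Lemma \ref{l2.1}.

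For part (3), given any eigenpair $(\lambda,\phi)$ with $\phi\in X^+\setminus\{(0,0)\}$, both halves of Lemma \ref{l2.1} applied to the identity $\mathcal{L}[\phi]=\lambda\phi$ force $\lambda=\lambda_p$. That $\phi$ is a positive multiple of $\phi_p$ follows from a sliding argument: set $t^*:=\sup\{t>0:\phi\ge t\phi_p\}$; then $\phi-t^*\phi_p$ is a nonnegative eigenfunction touching $0$ somewhere, and the integral representation together with $J_i(0)>0$ and the cooperative coupling propagates this zero throughout $[l_1,l_2]$, so $\phi=t^*\phi_p$. Geometric simplicity in (2) is then immediate. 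For algebraic simplicity I would construct, by applying (1) to the adjoint $\mathcal{L}^*$, a positive adjoint eigenfunction $\phi_p^*\in X^{++}$ at the same $\lambda_p$; pairing any hypothetical generalized eigenfunction $\psi$ solving $(\mathcal{L}-\lambda_p)\psi=\phi_p$ against $\phi_p^*$ produces $0=\langle\phi_p,\phi_p^*\rangle>0$, a contradiction.

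For part (4), strict monotonicity of $\lambda_p(l_1,l_2)$ in $l_2-l_1$ follows by zero-extending the principal eigenfunction of a smaller interval to obtain a strict super-solution on a larger interval and applying Lemma \ref{l2.1}(2); continuity is a standard perturbation statement for the compact operator $\mathcal{T}_\lambda$. For the uniform bounds, testing with the constant vector $(\theta_A,1)^T$ and using $\int J_i\le 1$ together with $A(\theta_A,1)^T=\gamma_A(\theta_A,1)^T$ gives $\mathcal{L}[(\theta_A,1)^T]\le\gamma_A(\theta_A,1)^T$, hence $\lambda_p\le\gamma_A$ on every interval; testing with $(\theta_B,1)^T$ and discarding the nonnegative integral gives $\mathcal{L}[(\theta_B,1)^T]\ge\gamma_B(\theta_B,1)^T$, hence $\lambda_p\ge\gamma_B$. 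The limit $\lambda_p\to\gamma_B$ as $l_2-l_1\to 0$ is then easy since $\|\mathcal{P}[\phi]\|_\infty=O(l_2-l_1)$ sharpens the upper bound.

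The main obstacle is the matching sharp lower bound $\lambda_p\to\gamma_A$ as $l_2-l_1\to\infty$. Here I would first translate so that the interval is centred at $0$ of length $l=l_2-l_1$, then apply Lemma \ref{l2.2} to $\xi(x)=l/2-|x|$ and test with $\phi(x)=\xi(x)(\theta_A,1)^T$. A direct computation using the lemma yields $\mathcal{L}[\phi](x)\ge\xi(x)\bigl[\gamma_A(\theta_A,1)^T-\ep(d_1\theta_A,d_2)^T\bigr]\ge(\gamma_A-C\ep)\phi(x)$, with $C$ depending only on $\theta_A,d_1,d_2$. Lemma \ref{l2.1}(2) then upgrades this to $\lambda_p\ge\gamma_A-C\ep$, which combined with $\lambda_p\le\gamma_A$ closes the argument.
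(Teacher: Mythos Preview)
Your proposal is correct and, for part (4), matches the paper almost line-for-line: the same constant test vectors $(\theta_A,1)^T$ and $(\theta_B,1)^T$ give the uniform bounds $\gamma_B\le\lambda_p\le\gamma_A$ via Lemma \ref{l2.1}; the small-interval limit comes from $\|\mathcal{P}\|=O(l_2-l_1)$ (the paper writes the explicit constant $2l\max_i d_i\|J_i\|_\infty$); and the large-interval limit uses Lemma \ref{l2.2} with the tent-shaped test function $\xi(x)(\theta_A,1)^T$ after centring the interval, exactly as you do. The paper also defers continuity and monotonicity to \cite[Proposition 2.3]{DN8}.

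Where you genuinely differ is in parts (1)--(3). The paper simply invokes \cite[Corollary 1.3 and Theorem 1.4]{SWZ} and moves on, whereas you outline a self-contained Krein--Rutman construction via the compact positive operator $\mathcal{T}_\lambda=(\lambda I-B)^{-1}\mathcal{P}$ on $X^+$ for $\lambda>\gamma_B$, followed by a sliding argument for geometric simplicity and an adjoint-pairing argument for algebraic simplicity. Your route is more transparent and, in particular, makes explicit why no self-adjointness is needed (you pass to $\mathcal{L}^*$, which has the same structure with $B^T$ in place of $B$); the paper's citation is shorter but opaque.

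One minor technical slip: in your monotonicity argument, zero-extending $\phi_p$ from $[l_1,l_2]$ to a larger interval yields a \emph{discontinuous} function (since $\phi_p\in X^{++}$ is strictly positive at the endpoints), so it is not in $X^+$ and Lemma \ref{l2.1}(2) does not literally apply. The clean fix is to go the other way: restrict the principal eigenfunction of the larger interval to the smaller one, observe that this gives $\mathcal{L}_{\rm small}[\phi]\le\lambda_p({\rm large})\,\phi$ with strict inequality near the boundary (because mass of $J_i$ and of $\phi$ is lost), and apply Lemma \ref{l2.1}(1).
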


\begin{proof}It is easy to verify that the statements (1)-(3) can be directly derived by \cite[Corollary 1.3 and Theorem 1.4]{SWZ}. Thus we only prove the statement (4).

Since the continuity and monotonicity can be obtained by following the analogous methods as in the proof of \cite[Proposition 2.3]{DN8}, the details are omitted. We next prove the limits of $\lambda_p(l_1,l_2)$ which will be done by virtue of Lemmas \ref{l2.1} and \ref{l2.2}. It is not hard to check that $\lambda_p(l_1,l_2)$ depends only on the length $l_2-l_1$ of interval $[l_1,l_2]$. Hence it is sufficient to prove the limits for $\lambda_p(-l,l)$. For simplicity, denote $\lambda_p(-l,l)$ by $\lambda_p(l)$.

Recall that $\gamma_A$ and $\theta_A$ are given by \eqref{2.2}. Define $\bar{\varphi}=(\theta_A,1)^T$. Simple calculations lead to
 \bess\begin{cases}
\dd d_1\int_{-l}^{l}J_1(x-y)\theta_A\dy-d_1\theta_A-a\theta_A+H'(0)\le -a\theta_A+H'(0)=\gamma_A\theta_A,\\
\dd d_2\int_{-l}^{l}J_2(x-y)\dy-d_2+G'(0)\theta_A-b\le G'(0)\theta_A-b=\gamma_A.
\end{cases}
 \eess
 Thus $\mathcal{L}[\bar\varphi]\le \gamma_A\bar{\varphi}$ for all $l>0$ which, combined with Lemma \ref{l2.1}, leads to
  \bes\label{2.3}
  \lambda_p(l)\le\gamma_A\;\;\;\text{for all}\;\,l>0.
 \ees

Define $\underline{\varphi}=(\underline{\varphi}_1(x),\underline\varphi_2(x))^T$ where
\[\underline{\varphi}_1(x)=\theta_A\xi(x), ~  \underline{\varphi}_2(x)=\xi(x) {\rm ~ and ~ } \xi(x)=l-|x|.\]

{\bf Claim.} For any small $\ep>0$ there exists $l_{\ep}>0$ such that when $l>l_{\ep}$ there holds:
 \bess
 \mathcal{L}[\underline{\varphi}]\ge (\gamma_A-\max\{d_1,d_2\}\ep)\underline{\varphi} ~ ~{\rm for ~ }x\in[-l,l].
\eess
Once this claim is verified, by Lemma \ref{l2.1}, we get $\lambda_p(l)\ge \gamma_A-\ep$ for $l\ge l_{\ep}$. Then in view of the arbitrariness of $\ep$, we have $
  \liminf_{l\to\yy}\lambda_p(l)\ge\gamma_A$.

Let us now verify the above claim. In light of Lemma \ref{l2.2}, for any small $\ep>0$, there exists a $l_{\ep}>0$ such that for any $l\ge l_{\ep}$ and $x\in[-l,l]$,
\bess\begin{cases}
\dd d_1\int_{-l}^{l}J_1(x-y)\underline{\varphi}_1(y)\dy-d_1\underline{\varphi}_1-a\underline{\varphi}_1+H'(0)\underline{\varphi}_2\ge(\gamma_A-d_1\ep)\theta_A,\\
\dd d_2\int_{-l}^{l}J_2(x-y)\underline{\varphi}_2(y)\dy-d_2\underline{\varphi}_1+G'(0)\underline{\varphi}_1-b\underline{\varphi}_2\ge(\gamma_A-d_2\ep).
\end{cases}
\eess
So our claim holds. Together with \eqref{2.3}, we derive the first limit.

Recall that $\gamma_B$ and $\theta_B$ are determined in \eqref{2.2}. Let $\underline\psi=(\theta_B,1)^T$. We claim that $\mathcal{L}[\psi]\ge\gamma_B\psi$ for all $l>0$. It is easy to verify
\bess
\begin{cases}
\dd d_1\int_{-l}^{l}J_1(x-y)\dy\theta_B-d_1\theta_B-a\theta_B+H'(0)
\ge -d_1\theta_B-a\theta_B+H'(0)=\gamma_B\theta_B,\\
\dd d_2\int_{-l}^{l}J_2(x-y)\dy-d_2-b+G'(0)\theta_B
\ge -d_1-b+G'(0)\theta_B=\gamma_B.
\end{cases}
\eess
Thus $\mathcal{L}[\ud\psi]\ge\gamma_B\ud\psi$ for all $l>0$. Due to Lemma \ref{l2.1}, we obtain $\lambda_p(l)\ge \gamma_B$ for all $l>0$.

On the other hand, straightforward computations show
 \bess
\dd d_1\int_{-l}^{l}J_1(x-y)\dy\theta_B-d_1\theta_B-a\theta_B+H'(0)
&\le& 2d_1l\|J_1\|_{\yy}\theta_B-d_1\theta_B-a\theta_B+H'(0)\\
&=&(\gamma_B+2d_1l\|J_1\|_{\yy})\theta_B,\\
\dd d_2\int_{-l}^{l}J_2(x-y)\dy-d_2-b+G'(0)\theta_B
&\le& 2d_2l\|J_2\|_{\yy}-d_1-b+G'(0)\theta_B\\
&=&\gamma_B+2d_2l\|J_2\|_{\yy}.
\eess
Using Lemma \ref{l2.1} again, we get $\lambda_p(l)\le \gamma_B+2l\max_{i=1,2}\{d_i\|J_i\|_{\yy}\}$, which implies $\limsup_{l\to0}\lambda_p(l)\le \gamma_B$. Together with $\lambda_p(l)\ge\gamma_B$ for all $l>0$, we finish the proof of the second limit. The proof is complete.
\end{proof}

According to Proposition \ref{p2.1}, we know that for principal eigenvalue $\lambda_p(l)$, there exists a unique eigenfunction $\phi^{l}=(\phi^l_1,\phi^l_2)\in X^{++}$ with $\|\phi^l\|_X=1$ . We now show $\phi^l$ is also continuous for $l>0$ in the following sense, which is independently interesting.

For any $l_0>0$ and a sequence $\{l_n\}$ with $l_n\to l_0$ as $n\to\yy$, we denote the corresponding triplet by $(\lambda_n,\phi^n_1,\phi^n_2)$ with $\phi^{n}=(\phi^n_1,\phi^n_2)\in X^{++}$ and $\|\phi^n\|_X=1$. Define for $x\in\mathbb{R}$,
\bess
\tilde{\phi}^n_1=\frac{\left|\begin{matrix}
                               \dd d_1\int_{-l_n}^{l_n}J_1(x-y)\phi^n_1(y)\dy & -H'(0) \\
                               \dd d_2\int_{-l_n}^{l_n}J_2(x-y)\phi^n_2(y)\dy & d_2+b+\lambda_n
                             \end{matrix}\right|}{\left|\begin{matrix}
                               d_1+a+\lambda_n & -H'(0) \\
                               -G'(0) & d_2+b+\lambda_n
                             \end{matrix}\right|},
\eess
and
\bess
\tilde{\phi}^n_2=\frac{\left|\begin{matrix}
                               d_1+a+\lambda_n & \dd d_1\int_{-l_n}^{l_n}J_1(x-y)\phi^n_1(y)\dy \\
                              -G'(0)  & \dd d_2\int_{-l_n}^{l_n}J_2(x-y)\phi^n_2(y)\dy
                             \end{matrix}\right|}{\left|\begin{matrix}
                               d_1+a+\lambda_n & -H'(0) \\
                               -G'(0) & d_2+b+\lambda_n
                             \end{matrix}\right|}.
\eess
Recall that $\lambda_n,\lambda_p(l_0)\in(\gamma_B,\gamma_A)$. Thus $(\tilde{\phi}^n_1,\tilde{\phi}^n_2)$ is well defined in $\mathbb{R}$.
Moreover, $(\tilde{\phi}^n_1,\tilde{\phi}^n_2)$ satisfies
\bess
\begin{cases}
\dd d_1\int_{-l_n}^{l_n}J_1(x-y)\phi^n_1(y)\dy-d_1\tilde{\phi}^n_1-a\tilde{\phi}^n_1+H'(0)\tilde{\phi}^n_2=\lambda_n\tilde{\phi}^n_1, ~ ~ x\in\mathbb{R},\\
\dd d_2\int_{-l_n}^{l_n}J_2(x-y)\phi^n_2(y)\dy-d_2\tilde{\phi}^n_2-b\tilde{\phi}^n_2+G'(0)\tilde{\phi}^n_1=\lambda_n\tilde{\phi}^n_2, ~ ~ x\in\mathbb{R}.
\end{cases}
\eess
Clearly, $(\phi^n_1,\phi^n_2)=(\tilde{\phi}^n_1,\tilde{\phi}^n_2)$ for $x\in[-l_n,l_n]$. It is not hard to check that $(\tilde{\phi}^n_1,\tilde{\phi}^n_2)$ is equi-continuous and uniformly bounded for $n\ge1$ and $x\in\mathbb{R}$. Then by a compact argument, there exists a subsequence, still denoted by itself, such that $(\tilde{\phi}^n_1,\tilde{\phi}^n_2)\to(\psi_1,\psi_2)$ locally uniformly in $\mathbb{R}$ as $n\to\yy$. Letting $n\to\yy$, we have
\bess
\begin{cases}
\dd d_1\int_{-l_0}^{l_0}J_1(x-y)\psi_1(y)\dy-d_1\psi_1-a\psi_1+H'(0)\psi_2=\lambda_p(l_0)\psi_1, ~ ~ x\in[-l_0,l_0],\\
\dd d_2\int_{-l_0}^{l_0}J_2(x-y)\psi_2(y)\dy-d_2\psi_2-b\psi_2+G'(0)\psi_1=\lambda_p(l_0)\psi_2, ~ ~  x\in[-l_0,l_0].
\end{cases}
\eess
Furthermore, since $\|\phi^n\|_X=1$ (here $X=C([-l_n,l_n])$), $(\phi^n_1,\phi^n_2)=(\tilde{\phi}^n_1,\tilde{\phi}^n_2)$ for $x\in[-l_n,l_n]$, $l_n\to l_0$ and $(\tilde{\phi}^n_1,\tilde{\phi}^n_2)\to(\psi_1,\psi_2)$ locally uniformly in $\mathbb{R}$ as $n\to\yy$, we can show that $\max_{x\in[-l_0,l_0]}(\psi_1(x)+\psi_2(x))=1$. In view of Proposition \ref{p2.1}, we get that $(\psi_1,\psi_2)$ is the unique positive eigenfunction of $\lambda_p(l_0)$ with $\|(\psi_1,\psi_2)\|_X=1$ where $X=C([-l_0,l_0])$.

Now we investigate the following nonlocal steady state problem on the half space
\bes\label{2.4}
\begin{cases}
 \dd d_1\int_{0}^{\yy}J_1(x-y)U(y)\dy-d_1U-aU+H(V)=0, ~ ~ x\in[0,\yy),\\
\dd d_2\int_{0}^{\yy}J_2(x-y)V(y)\dy-d_2V-bV+G(U)=0, ~ ~  x\in[0,\yy).
\end{cases}
\ees

\begin{proposition}\label{p2.2}Assume that $\mathcal{R}_0>1$. Then problem \eqref{2.4} has a unique bounded positive solution $(U,V)$. Moreover, $(U,V)\in [C([0,\yy))]^2$, $(0,0)<(U(x),V(x))<(u^*,v^*)$ for $x\ge0$, $(U(x),V(x))$ is strictly increasing in $x\ge0$ and $(U(x),V(x))\to(u^*,v^*)$ as $x\to\yy$, where $(u^*,v^*)$ is unique given by \eqref{1.3}.
\end{proposition}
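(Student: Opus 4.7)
The plan is to establish existence by monotone iteration on an expanding family of bounded intervals, using the principal eigenvalue $\lambda_p$ from Proposition \ref{p2.1} to produce sub-solutions, and then to deduce uniqueness and the remaining qualitative properties via a sweeping argument built on the concavity of $H$ and $G$.

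For existence, I first observe that $\mathcal{R}_0>1$ is algebraically equivalent to $\gamma_A>0$. Combined with Proposition \ref{p2.1}(4), this lets me fix $L_0>0$ such that $\lambda_p(0,L)>0$ for every $L\ge L_0$, with positive eigenfunction $\phi^L=(\phi^L_1,\phi^L_2)\in X^{++}$. On each $[0,L]$ I consider the truncated problem obtained by replacing the integral range $[0,\yy)$ in \eqref{2.4} by $[0,L]$. The constant pair $(u^*,v^*)$ is a super-solution, using $\int_0^L J_i(x-y)\dy\le 1$ and \eqref{1.3}, while $\ep\phi^L$ is a sub-solution for $\ep>0$ small, since the eigenvalue identity produces a positive linear contribution $\ep\lambda_p(0,L)\phi^L_i$ that dominates the $O(\ep^2)$ concavity defect $H(\ep\phi^L_2)-\ep H'(0)\phi^L_2$ and its $G$-analogue. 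The cooperative structure then permits a standard monotone iteration producing a positive solution $(U_L,V_L)$ of the truncated problem, sandwiched between these two bounds.

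Extending $(U_L,V_L)$ by zero outside $[0,L]$ yields a sub-solution of the truncated problem on any larger interval $[0,L']$, so by comparison $(U_L,V_L)$ is non-decreasing in $L$. Local equi-continuity, read off the fixed-point form of the equation and continuity of translation in $L^1$ for $J_i$, then lets me pass to the limit $L\to\yy$ and obtain a continuous solution $(U,V)$ of \eqref{2.4} with $0<U(x)\le u^*$ and $0<V(x)\le v^*$ on $[0,\yy)$. Strict inequality $(U,V)<(u^*,v^*)$ follows from a strong-maximum-principle argument for the linear cooperative nonlocal operator: equality at one point would force $U\equiv u^*$ and $V\equiv v^*$, contradicting the strict mass loss $\int_0^\yy J_i(x-y)\dy<1$ for $x$ close to $0$. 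For monotonicity in $x$, I observe that the shifted pair $(U(\cdot+\tau),V(\cdot+\tau))$ is a super-solution of \eqref{2.4} because a change of variable contributes a nonnegative forcing $d_i\int_{-\tau}^0 J_i(x-z)U(z+\tau)\,{\rm d}z$; uniqueness (below) then forces $(U,V)\le (U(\cdot+\tau),V(\cdot+\tau))$, with strictness again from the maximum principle. Monotonicity and boundedness yield limits $(c_1,c_2)$ at $x=\yy$; passing to the limit in \eqref{2.4} and using the sub-solution lower bound to exclude $(0,0)$ forces $(c_1,c_2)=(u^*,v^*)$.

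The chief obstacle is uniqueness, which I would handle by the classical sweeping argument. Given two bounded positive solutions $(U_1,V_1)$ and $(U_2,V_2)$, set
\[\kappa^*=\sup\{\kappa\in(0,1]:\kappa(U_1,V_1)\le(U_2,V_2)\text{ on }[0,\yy)\}.\]
This supremum is positive because both solutions tend to $(u^*,v^*)$ at infinity and are bounded away from zero on any compact set. If $\kappa^*<1$, then strict concavity together with $H(0)=G(0)=0$ yields $H(\kappa^*V_1)-\kappa^*H(V_1)>0$ and $G(\kappa^*U_1)-\kappa^*G(U_1)>0$, so $\kappa^*(U_1,V_1)$ is a \emph{strict} sub-solution of \eqref{2.4} lying pointwise below $(U_2,V_2)$. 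A Hopf-type lemma for the coupled nonlocal operator then produces a uniform positive gap between $\kappa^*(U_1,V_1)$ and $(U_2,V_2)$, contradicting the maximality of $\kappa^*$. The main technical work is thus packaged into this strong maximum principle for the cooperative nonlocal system; once that is in place, the monotone iteration, the shift argument, and the passage to the limit at infinity all go through as sketched.
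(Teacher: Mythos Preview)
Your overall strategy matches the paper's: monotone iteration on truncated intervals $[0,L]$ with $(u^*,v^*)$ and $\ep\phi^L$ as barriers, a concavity-based sweeping argument for uniqueness, and a shift argument for monotonicity in $x$. The difficulty is a logical circularity that leaves the uniqueness step incomplete on the unbounded domain.

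You deduce monotonicity in $x$ from uniqueness, then the limit $(U,V)\to(u^*,v^*)$ from monotonicity, and then in the uniqueness proof you assume that \emph{both} bounded positive solutions tend to $(u^*,v^*)$. That assumption is exactly what makes $\kappa^*>0$ and what would let a ``Hopf-type lemma'' yield a uniform gap: on $[0,\yy)$ a strict sub-solution inequality does not by itself rule out touching at infinity between $\kappa^*(U_1,V_1)$ and $(U_2,V_2)$. For the constructed solution this chain is circular; for an \emph{arbitrary} bounded positive solution $(U_2,V_2)$ the limit at infinity is never established at all. The paper breaks the cycle differently. It first proves $(U,V)\to(u^*,v^*)$ with no appeal to monotonicity, via a translation trick: $(u_{2x_n}(\cdot+x_n),v_{2x_n}(\cdot+x_n))$ solves the truncated problem on $[-x_n,x_n]$ and converges locally to $(u^*,v^*)$ by \cite[Proposition~2.11]{NV}. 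It then shows that \emph{any} bounded positive solution $(\tilde U,\tilde V)$ satisfies $(\tilde U,\tilde V)\ge(u_l,v_l)$ on each $[0,l]$ (a compact-domain sweeping, hence $(\tilde U,\tilde V)\ge(U,V)$) and $(\tilde U,\tilde V)\le(u^*,v^*)$ (a $\sup$-case analysis using {\bf(H)}); these together force $(\tilde U,\tilde V)\to(u^*,v^*)$. With both limits in hand the sweeping works cleanly: if the scaling factor exceeds $1$ there must be a \emph{finite} touching point $x_0$, and the concavity of $H$ at $x_0$ delivers the contradiction directly---no abstract Hopf lemma is invoked. Monotonicity in $x$ is proved only at the end, via the shift super-solution and the comparison just established; note also that uniqueness per se does not compare a solution to a mere super-solution---what you actually need for the shift argument is the minimality of $(U,V)$ among bounded positive super-solutions, which comes from its construction as $\lim_l(u_l,v_l)$.
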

\begin{proof}Since this proof is long, it will be divided into three steps.

{\bf Step 1:}{\it The existence.} For clarity, we denote the principal eigenvalue of \eqref{2.1} on interval $[0,l]$ by $\lambda_p(l)$. Due to $\mathcal{R}_0>1$, from Proposition \ref{p2.1} it follows that $\lambda_p(l)>0$ for all large $l>0$. Let $\phi=(\phi_1,\phi_2)$ be the corresponding positive eigenfunction of $\lambda_p(l)$ with $\|\phi\|_X=1$.
 Define an operator ${\it\Gamma}$: $X^+\to X^+$ by \[{\it\Gamma}[\phi]=\begin{pmatrix}
 \dd\frac1{d_1+a}\left(d_1\int_0^lJ_1(x-y)\phi_1(y)\dy+H(\phi_2)\right) \\[4mm] \dd\frac1{d_2+b}\left(d_2\int_0^lJ_2(x-y)\phi_2(y)\dy+G(\phi_1)\right)
  \end{pmatrix}.\]
Clearly, ${\it\Gamma}$ is increasing in $X^+$, i.e., $\Gamma(\phi)\ge\Gamma(\tilde{\phi})$ for any $\phi,\tilde{\phi}\in X^+$ and $\phi-\tilde{\phi}\in X^+$. Simple computations show
 \bess
 \frac1{d_1+a}\left(d_1\int_0^lJ_1(x-y)u^*\dy+H(v^*)\right)&\le&
 \frac1{d_1+a}\left[d_1u^*+H(v^*)\right]\nonumber\\
 &=&\frac1{d_1+a}\left[d_1u^*+au^*\right]\nonumber\\[1mm]
 &=&u^*,\;\;\;x\in[0,l],\\
 \frac1{d_2+b}\left(d_2\int_0^lJ_2(x-y)v^*\dy+G(u^*)\right)&\le&
 v^*,\;\;\;x\in[0,l],
 \eess
which implies that ${\it\Gamma}[(u^*,v^*)]\le(u^*,v^*)$.

We now show that if $\ep$ is sufficiently small, then ${\it\Gamma}[\ep\phi]\ge\ep\phi$. Direct calculations yield
 \bess
 &&\frac1{d_1+a}\left[d_1\int_0^lJ_1(x-y)\ep\phi_1(y)\dy+H(\ep\phi_2)\right]
 -\ep\phi_1\\[1mm]
 &=&\frac{\ep}{d_1+a}\left[\lambda_p(l)\phi_1+d_1\phi_1+a\phi_1-H'(0)\phi_2+\frac{H(\ep\phi_2)}{\ep}\right]
 -\ep\phi_1\\[1mm]
 &=&\frac{\ep}{d_1+a}\left[\lambda_p(l)\phi_1-H'(0)\phi_2+\frac{H(\ep\phi_2)}{\ep}\right]\\[1mm]
 &\ge&\frac\ep{d_1+a}\left[\lambda_1(l)\phi_1+\frac{H(\ep)}{\ep}-H'(0)\right]\\[1mm]
 &\ge&\frac\ep{d_1+a}\left[\lambda_1(l)\min_{x\in[0,l]}\phi_1(x)+o(1)\right]\ge0
 \eess
provided that $\ep$ is small enough. Similarly,
 \[\frac1{d_2+b}\left[d_2\int_0^lJ_2(x-y)\ep\phi_2(y)\dy
 +G(\ep\phi_1)\right]\ge\ep\phi_2\]
 with $\ep$ small enough. Thus ${\it\Gamma}[\ep\phi]\ge\ep\phi$. We further shrink $\ep$ if necessary such that $(\ep\phi_1,\ep\phi_2)\le(u^*,v^*)$ for $x\in[0,l]$.

Then by an iteration or upper-lower solution method, we see that problem
\bes\label{2.5}
\begin{cases}
 \dd d_1\int_{0}^{l}J_1(x-y)u(y)\dy-d_1u-au+H(v)=0, ~ ~ x\in[0,l],\\
\dd d_2\int_{0}^{l}J_2(x-y)v(y)\dy-d_2v-bv+G(u)=0, ~ ~  x\in[0,l]
\end{cases}
\ees
 has at least one  solution $(u_l,v_l)$ satisfying $(\ep\phi_1,\ep\phi_2)\le(u_l,v_l)\le (u^*,v^*)$ in $[0,l]$.

We now prove $(u_l,v_l)$ is continuous in $[0,l]$ by using the implicit function theorem and some basic analysis. Define
 \bess
 &Q_1(x)=\dd d_1\int_0^lJ_1(x-y)u_l(y)\dy, ~ ~ Q_2(x)=d_2\int_0^lJ_2(x-y)v_l(y)\dy,\\[1mm]
 &P(x,y,z)=\big(Q_1(x)-d_1y-ay+H(z), \;Q_2(x)-d_2z-bz+G(y)\big).
 \eess
Clearly, $P(x,y,z)$ is continuous in $\{(x,y,z):0\le x\le l,y\ge0,z\ge0\}$, and $P(x, u_l(x),v_l(x))=(0,0)$ for all $0<x<l$.
With regard to $0<x<l$, $y>0$, $z>0$ satisfying $P(x,y,z)=(0,0)$, in view of {\bf (H)}, direct computations yield
 \bess
 \frac{\partial P(x,y,z)}{\partial(y,z)}&=&\begin{pmatrix}                                                  -d_1-a &  H'(z) \\
  G'(y) & -d_2-b
   \end{pmatrix},
 \eess
which is continuous for $0<x<l$, $y,z>0$, and
 \bess
 {\rm det}\frac{\partial P(x,y,z)}{\partial(y,z)}&=&(d_1+a)(d_2+b)-H'(z)G'(y)\\[1mm]
 &=&\frac{(Q_1(x)+H(z))(Q_2(x)+G(y))}{yz}-H'(z)G'(y)\\[1mm]
 &\ge&\frac{H(z)G(y)}{yz}-H'(z)G'(y)>0.
 \eess
Hence, by the implicit function theorem, we know that $(u_l,v_l)$ is continuous in $(0,l)$.  Then it remains to show the continuity at $0$ and $l$. Since the methods are similar, we only handle the case at $x=0$. Clearly, it suffices to prove that for any sequence $\{x_n\}$ converging to $0$, there exists a subsequence, still denoted by itself, such that $u_l(x_n)\to u_l(0)$ and $v_l(x_n)\to v_l(0)$ as $n\to\yy$. Due to the boundedness of $u_l$ and $v_l$, by passing a subsequence denoted by itself, we have $u_l(x_n)\to u_0$ and $v_l(x_n)\to v_0$ for some $u_0>0$ and $v_0>0$. Notice that the integral terms in \eqref{2.5} are continuous in $[0,l]$. Letting $n\to\yy$, thanks to \eqref{2.5} we have
 \bes\label{2.6}
 \left\{\begin{aligned}
&d_1\int_0^{l}J_1(y)u_l(y)\dy-d_1u_0-au_0+H(v_0)=0,\\
&d_2\int_0^{l}J_2(y)v_l(y)\dy-d_2v_0-bv_0+G(u_0)=0.
 \end{aligned}\right.
 \ees
Moreover, substituting $x=0$ into \eqref{2.5} leads to
  \bes\label{2.7}
 \left\{\begin{aligned}
&d_1\int_0^{l}J_1(y)u_l(y)\dy-d_1u_l(0)-au_l(0)+H(v_l(0))=0,\\
&d_2\int_0^{l}J_2(y)v_l(y)\dy-d_2v_l(0)-bv_l(0)+G(u_l(0))=0.
 \end{aligned}\right.
 \ees
 Define
 \[\vartheta^*_0=\inf\{\vartheta>1: \vartheta u_l(0)\ge u_0, \vartheta v_l(0)\ge v_0\}.\]
Obviously, $\vartheta^*_0$ is well defined and $\vartheta^*_0\ge1$. We next show $\vartheta^*_0=1$. Assume on the contrary that $\vartheta^*>1$. Then by the definition of $\vartheta^*_0$, we must have $\vartheta^*_0u_l(0)=u_0$ or $\vartheta^*_0v_l(0)=v_0$. Without loss of generality, let $\vartheta^*_0u_l(0)=u_0$. Then by the first equalities of \eqref{2.6} and \eqref{2.7}, it is not hard to derive $\vartheta^*_0H(v_l(0))<H(v_0)$. However, by {\bf (H)}, we have
\bess
H(v_0)-\vartheta^*_0H(v_l(0))=\vartheta^*_0v_l(0)\bigg(\frac{H(v_0)}{\vartheta^*_0v_l(0)}-\frac{H(v_l(0))}{v_l(0)}\bigg)
\le\vartheta^*_0v_l(0)\bigg(\frac{H(\vartheta^*_0v_l(0))}{\vartheta^*_0v_l(0)}-\frac{H(v_l(0))}{v_l(0)}\bigg)<0.
\eess
This contradiction implies that $u_l(0)\ge u_0$ and $v_l(0)\ge v_0$. By changing the position of $(u_l(0),v_l(0))$ and $(u_0,v_0)$, we can deduce $u_0\ge u_l(0)$ and $v_0\ge v_l(0)$. So $u_l(0)=u_0$ and $v_l(0)=v_0$. Therefore, $(u_l,v_l)$ is continuous in $[0,l]$.

Now we claim that $(u_l,v_l)$ is increasing for all large $l$, i.e., $(u_{l_1}(x),v_{l_1}(x))\le (u_{l_2}(x),v_{l_2}(x))$ for any large $l_2>l_1$ and $x\in[0,l_1]$. Recall that $(u_l,v_l)$ is continuous and positive in $[0,l]$. Thus we can define
\[k^*=\inf\{k>1: (u_{l_1}(x),v_{l_1}(x))\le k(u_{l_2}(x),v_{l_2}(x)) ~ ~ {\rm for ~ }x\in[0,l_1]\}.\]
Clearly, $k^*\ge1$ and $(u_{l_1}(x),v_{l_1}(x))\le k^*(u_{l_2}(x),v_{l_2}(x))$ for $x\in[0,l_1]$. If $k^*>1$, then by continuity there must exist some $x_0\in[0,l]$ such that $k^*u_{l_2}(x_0)=u_{l_1}(x_0)$ or $k^*v_{l_2}(x_0)=v_{l_1}(x_0)$. Without loss of generality, we suppose $k^*u_{l_2}(x_0)=u_{l_1}(x_0)$. Moreover, $(u_{l_2},v_{l_2})$ satisfies
\bess
\begin{cases}
 \dd d_1\int_{0}^{l_1}J_1(x-y)u_{l_2}(y)\dy-d_1u_{l_2}-au_{l_2}+H(v_{l_2})\le0, ~ ~ x\in[0,l_1],\\
\dd d_2\int_{0}^{l_1}J_2(x-y)v_{l_2}(y)\dy-d_2v_{l_2}-bv_{l_2}+G(u_{l_2})\le0, ~ ~  x\in[0,l_1].
\end{cases}
\eess
Combing with \eqref{2.5} on $[0,l_1]$ and arguing as in the proof of the continuity of $(u_l,v_l)$ on $x=0$, we can obtain a contradiction. Thus $k^*=1$, and our claim is proved.

According to the above analysis, we know $U(x):=\lim_{l\to\yy}u_l(x)$ and $V(x):=\lim_{l\to\yy}v_l(x)$ for all $x\ge0$ are well defined. Moreover, $(0,0)<(U(x),V(x))\le (u^*,v^*)$ for $x\ge0$. By the dominated convergence theorem, we see that $(U,V)$ satisfies \eqref{2.4}. Hence the existence is proved.

{\bf Step 2:} {\it The uniqueness.} Firstly, by the similar methods as in the proof of the continuity of $(u_l,v_l)$, we can show that $(U,V)$ is continuous in $x\ge0$. Now we show that $(U(x),V(x))\to(u^*,v^*)$ as $x\to\yy$. Otherwise, there exist $\ep_0>0$ and a sequence $\{x_n\}$ satisfying $x_n\nearrow\yy$ as $n\to\yy$ such that $U(x_n)\le u^*-\ep_0$ or $V(x_n)\le v^*-\ep_0$. Without loss of generality, we suppose that $U(x_n)\le u^*-\ep_0$ for $n\ge1$.

Set $w_n(x)=u_{2x_n}(x+x_n)$ and $z_n(x)=v_{2x_n}(x+x_n)$. Since $(u_{2x_n},v_{2x_n})$ satisfies \eqref{2.5} on $[0,2x_n]$, we have
\bess
\begin{cases}
 \dd d_1\int_{-x_n}^{x_n}J_1(x-y)w_n(y)\dy-d_1w_n-aw_n+H(z_n)=0, ~ ~ x\in[-x_n,x_n],\\
\dd d_2\int_{-x_n}^{x_n}J_2(x-y)z_n(y)\dy-d_2z_n-bz_n+G(w_n)=0, ~ ~ x\in[-x_n,x_n].
\end{cases}
\eess
Thus $(w_n,z_n)$ is the positive solution of \eqref{2.5} on $[-x_n,x_n]$. From \cite[Proposition 2.11]{NV}, we know $(w_n(x),z_n(x))\to(u^*,v^*)$ locally uniformly in $\mathbb{R}$ as $n\to\yy$. Hence for large $n$,
\[(u_{2x_n}(x_n),v_{2x_n}(x_n))=(w_n(0),z_n(0))\ge(u^*-\frac{\ep_0}{2},v^*-\frac{\ep_0}{2}).\]
By the definition of $(U,V)$, we have
\[(U(x_n),V(x_n))\ge(u_{2x_n}(x_n),v_{2x_n}(x_n))\ge(u^*-\frac{\ep_0}{2},v^*-\frac{\ep_0}{2}).\]
 However, since $U(x_n)\le u^*-\ep_0$ for $n\ge1$, we derive a contradiction. So $(U(x),V(x))\to(u^*,v^*)$ as $x\to\yy$.

Let $(\tilde{U},\tilde{V})$ be another bounded positive solution of \eqref{2.4}. Similar to the arguments in the proof of the continuity of $(u_l,v_l)$ in the Step 1, we can show that $(\tilde{U},\tilde{V})$ is continuous in $x\ge0$. Moreover, by arguing as in the proof of the monotonicity of $(u_l,v_l)$ about $l$, we can deduce that $(\tilde{U},\tilde{V})\ge(u_l,v_l)$ for $x\in[0,l]$ and any large $l$, which, combined with the definition of $(U,V)$, yields that $(\tilde{U}(x),\tilde{V}(x))\ge(U(x),V(x))$ for $x\ge0$.

We now show that $(\tilde{U},\tilde{V})\le(u^*,v^*)$. By way of contradiction, we suppose that $\tilde{U}_{sup}:=\sup_{x\in[0,\yy)}\tilde{U}(x)>u^*$. Then there are two cases:
\begin{enumerate}[$(1)$]
  \item {\it Case 1:} There exists some $x_1\in[0,\yy)$ such that $\tilde{U}(x_1)=\tilde U_{sup}>u^*$.
  \item {\it Case 2:} $\tilde U(x)<\tilde{U}_{sup}$ for all $x\ge0$, and there exists a sequence converging to $\yy$ such that $\tilde{U}$ converges to $\tilde{U}_{sup}>u^*$ along this sequence.
\end{enumerate}

For Case 1, in view of \eqref{2.4}, we see $a\tilde U_{sup}\le H(\tilde{V}(x_1))$. Together with {\bf (H)}, we have $\tilde V_{sup}:=\sup_{x\in[0,\yy)}\tilde{V}(x)>v^*$. If there exists some $x_2\in[0,\yy)$ such that $\tilde{V}(x_2)=\tilde V_{sup}$, using \eqref{2.4} again leads to $b\tilde V_{sup}\le G(\tilde U(x_2))$. Hence we now derive
\[a\tilde U_{sup}\le H(\tilde{V}(x_1)), ~ b\tilde V_{sup}\le G(\tilde U(x_2)), ~ \tilde U_{sup}\ge\tilde{U}(x_2), ~ \tilde V_{sup}\ge\tilde{V}(x_1),\]
which, combined with {\bf (H)}, arrives at $G(H(\tilde{V}(x_1))/a)\ge b\tilde{V}(x_1)$. This implies that there exists another positive constant equilibrium which is different from $(u^*,v^*)$. So we obtain a contradiction. If $\tilde{V}(x)<\tilde V_{sup}$ for all $x\ge0$, then one can find a sequence $\{\tilde{x}_n\}\nearrow\yy$ such that $\tilde{V}(\tilde{x}_n)\to \tilde V_{sup}$ as $n\to\yy$. Since $\tilde{U}(\tilde{x}_n)$ is bounded, there exists a subsequence, still denoted by itself, such that $\tilde{U}(\tilde{x}_n)\to \tilde{U}^{\yy}$. Moreover, direct calculations show
\bess
\limsup_{n\to\yy}(\int_{0}^{\yy}J_2(\tilde{x}_n-y)\tilde{V}(y)\dy-\tilde{V}(\tilde{x}_n))\le\limsup_{n\to\yy}(\tilde V_{sup}-\tilde{V}(\tilde{x}_n))=0,
\eess
which implies that there exists a subsequence, still denoted by itself, such that
\[\lim_{n\to\yy}(b\tilde{V}(\tilde{x}_n)-G(\tilde{U}(\tilde{x}_n)))\le0.\]
Thus $b\tilde V_{sup}\le G(\tilde{U}^{\yy})$, and we have
\[a\tilde U_{sup}\le H(\tilde{V}(x_1)), ~ b\tilde V_{sup}\le G(\tilde{U}^{\yy}), ~ \tilde U_{sup}\ge\tilde{U}^{\yy}, ~ \tilde V_{sup}\ge\tilde{V}(x_1),\]
which similarly shows $G(H(\tilde{V}(x_1))/a)\ge b\tilde{V}(x_1)$. Then a analogous contradiction is obtained. To conclude, we now have derived that if $\tilde U_{sup}>u^*$, then $\tilde U_{sup}$ can not be achieved at some point in $[0,\yy)$, i.e., Case 1 can not happen.

 Arguing as above, we can show that if $\tilde U_{sup}>u^*$, then there is no such sequence along which $\tilde{U}$ converges to $\tilde U_{sup}$. Namely, Case 2 also will not occur. Therefore, $(\tilde{U},\tilde{V})\le(u^*,v^*)$ for all $x\ge0$. Combining with $(U,V)\to(u^*,v^*)$ as $x\to\yy$ and $(\tilde{U},\tilde{V})\ge(U,V)$, we immediately get $(\tilde U,\tilde V)\to(u^*,v^*)$ as $x\to\yy$.

With the aid of the above preparations, we are in the position to show the uniqueness. Clearly, it is sufficient to prove $(U,V)\ge(\tilde{U},\tilde{V})$ since we already showed $(U,V)\le(\tilde{U},\tilde{V})$. According to the above analysis, we can define
\[\rho^*=\inf\{\rho>1: \rho(U(x),V(x))\ge(\tilde{U}(x),\tilde{V}(x)) ~ ~ {\rm for ~ }x\ge0\}.\]
Obviously, $\rho^*\ge1$ and $\rho^*(U(x),V(x))\ge(\tilde{U}(x),\tilde{V}(x))$ for $x\ge0$. We now show $\rho^*=1$. Otherwise, it is easy to see that there exists some $x_2\in[0,\yy)$ such that $\rho^*U(x_2)=\tilde{U}(x_2)$ or $\rho^*V(x_2)=\tilde{V}(x_2)$.  Without loss of generality, we assume that $\rho^*U(x_2)=\tilde{U}(x_2)$. By virtue of \eqref{2.4}, we have $\rho^*H(V(x_2))\le H(\tilde{V}(x_2))$. However, by {\bf (H)}, we see
\bess
\rho^*H(V(x_2))-H(\tilde{V}(x_2))&=&\rho^*V(x_2)(\frac{H(V(x_2))}{V(x_2)}-\frac{H(\tilde{V}(x_2))}{\rho^*V(x_2)})\\
&\ge&\rho^*V(x_2)(\frac{H(V(x_2))}{V(x_2)}-\frac{H(\rho^*V(x_2))}{\rho^*V(x_2)})>0.
\eess
So we obtain a contradiction. Thus $\rho^*=1$, and further $(U(x),V(x))\ge(\tilde{U}(x),\tilde{V}(x))$ for $x\ge0$. So the uniqueness is proved.

{\bf Step 3:} {\it The monotonicity.} For any $x\ge0$ and $\delta>0$, denote $(U(x+\delta),V(x+\delta))$ by $(U_{\delta}(x),V_{\delta}(x))$. Then $(U_{\delta},V_{\delta})$ satisfies
\bess
\begin{cases}
 \dd d_1\int_{-\delta}^{\yy}J_1(x-y)U_{\delta}(y)\dy-d_1U_{\delta}-aU_{\delta}+H(V_{\delta})=0, ~ ~ x\in[0,\yy),\\
\dd d_2\int_{-\delta}^{\yy}J_2(x-y)V_{\delta}(y)\dy-d_2V_{\delta}-bV_{\delta}+G(U_{\delta})=0, ~ ~  x\in[0,\yy),
\end{cases}
\eess
which implies
\bess
\begin{cases}
 \dd d_1\int_{0}^{\yy}J_1(x-y)U_{\delta}(y)\dy-d_1U_{\delta}-aU_{\delta}+H(V_{\delta})\le0, ~ ~ x\in[0,\yy),\\
\dd d_2\int_{0}^{\yy}J_2(x-y)V_{\delta}(y)\dy-d_2V_{\delta}-bV_{\delta}+G(U_{\delta})\le0, ~ ~  x\in[0,\yy).
\end{cases}
\eess
Then we can argue as in the proof of the uniqueness to derive that $(U_{\delta}(x),V_{\delta}(x))\ge(U(x),V(x))$ for all $x\ge0$. Thus $(U,V)$ is increasing in $x\ge0$.

Now we show that $(U,V)$ is strictly increasing in $x\ge0$. We only prove the monotonicity of $U$ since one can similarly prove the case for $V$. Thanks to {\bf (J)}, there exists a $\delta>0$ such that $J_1(x)>0$ for $x\in[-\delta,\delta]$. Hence it is sufficient to show that $U$ is strictly increasing in $[k\delta,(k+1)\delta]$ for all nonnegative integer $k$. If there exist $x_1$ and $x_2$ with $0\le x_1<x_2\le \delta$ such that $U(x_1)=U(x_2)$, then by the equation of $U$, we have
\bes\label{2.8}
d_1\int_{0}^{\yy}J_1(x_1-y)U(y)\dy+H(V(x_1))=d_1\int_{0}^{\yy}J_1(x_2-y)U(y)\dy+H(V(x_2)).
\ees
A straightforward computation yields
\bess
&&\int_{0}^{\yy}J_1(x_2-y)U(y)\dy-\int_{0}^{\yy}J_1(x_1-y)U(y)\dy\\
&=&\int_{-x_2}^{\yy}J_1(y)U(y+x_2)\dy-\int_{-x_1}^{\yy}J_1(y)U(y+x_1)\dy\\
&=&\int_{-x_2}^{-x_1}J_1(y)U(y+x_2)\dy+\int_{-x_1}^{\yy}J_1(y)U(y+x_2)\dy-\int_{-x_1}^{\yy}J_1(y)U(y+x_1)\dy\\
&\ge&\int_{-x_2}^{-x_1}J_1(y)U(y+x_2)\dy>0,
\eess
which, combined with \eqref{2.8}, leads to $H(V(x_1))>H(V(x_2))$. However, since $V(x_1)\le V(x_2)$ and $H$ is strictly increasing in $x\ge0$, we must have $H(V(x_1))\le H(V(x_2))$. So we get a contradiction, and $U$ is strictly increasing in $[0,\delta]$.

Arguing inductively, we assume that $U$ is strictly increasing in $[k\delta,(k+1)\delta]$. If there exist $x_1$ and $x_2$ with $(k+1)\delta\le x_1<x_2\le(k+2)\delta$ such that $U(x_1)=U(x_2)$, then by the equation of $U$, \eqref{2.8} holds. Moreover, since $J_1(x)>0$ for $x\in[-\delta,\delta]$ and $U(x_2-\delta)-U(x_1-\delta)>0$, we have
\bess
&&\int_{0}^{\yy}J_1(x_2-y)U(y)\dy-\int_{0}^{\yy}J_1(x_1-y)U(y)\dy\\
&=&\int_{-x_2}^{\yy}J_1(y)U(y+x_2)\dy-\int_{-x_1}^{\yy}J_1(y)U(y+x_1)\dy\\
&=&\int_{-x_2}^{-x_1}J_1(y)U(y+x_2)\dy+\int_{-x_1}^{\yy}J_1(y)U(y+x_2)\dy-\int_{-x_1}^{\yy}J_1(y)U(y+x_1)\dy\\
&\ge&\int_{-\delta}^{0}J_1(y)(U(y+x_2)-U(y+x_1))\dy>0.
\eess
Then as above, we can derive a contradiction. Thus $U$ is strictly increasing in $[0,\yy)$. This step is complete.

Now to finish the proof, it remains to show $(U,V)<(u^*,v^*)$ for $x\ge0$. Recall that we already knew $(U,V)\le(u^*,v^*)$ for $x\ge0$. If $U$ achieves its supremum $u^*$ somewhere on $[0,\yy)$, then there must exist some $x_0$ such that $U(x_0)=u^*$ and $U(x)<u^*$ in the left or right neighborhood of $x_0$ since $U\not\equiv u^*$.
Direct calculations show that
\[\int_{0}^{\yy}J(x_0-y)U(y)\dy-U(x_0)<0.\]
Substituting such $x_0$ into the equation of $U$ leads to $H(V(x_0))>aU(x_0)=au^*$ which implies $V(x_0)>v^*$. This is a contradiction since $V(x)\le v^*$ for all $x\ge0$. Analogously, we can show $V<v^*$. Therefore, the proof is ended.
\end{proof}

\begin{remark}\label{r2.1}From the above proof, it follows that if $\lambda_p(l)>0$, then \eqref{2.5} has a positive solution $(u_l,v_l)$. Moreover, by arguing as in the proof of uniqueness, we can show that the positive solution $(u_l,v_l)$ of \eqref{2.5} is unique.
\end{remark}

Next we discuss the following two problems whose spatial domains are a bounded domain $[0,l]$ and a half space $[0,\yy)$, respectively.
\bes\left\{\begin{aligned}\label{2.9}
&u_t=d_1\int_0^lJ_1(x-y)u(y)\dy-d_1u-au+H(v), & & t>0, ~ ~ x\in[0,l]\\
&v_t=d_2\int_0^lJ_2(x-y)v(y)\dy-d_2v-bv+G(u), & & t>0, ~ ~ x\in[0,l],\\
&u(0,x)=\tilde{u}_0(x), ~ ~v(0,x)=\tilde{v}_0(x),
 \end{aligned}\right.
 \ees
where $(\tilde u_0,\tilde v_0)\in X^+\setminus\{(0,0)\}$.

\bes\left\{\begin{aligned}\label{2.10}
&u_t=d_1\int_0^{\yy}J_1(x-y)u(y)\dy-d_1u-au+H(v), & & t>0, ~ ~ x\in[0,\yy)\\
&v_t=d_2\int_0^{\yy}J_2(x-y)v(y)\dy-d_2v-bv+G(u), & & t>0, ~ ~ x\in[0,\yy),\\
&u(0,x)=\tilde{u}_0(x), ~ ~v(0,x)=\tilde{v}_0(x),
 \end{aligned}\right.
 \ees
where $(\tilde u_0,\tilde v_0)\in C([0,\yy))\cap L^{\yy}([0,\yy))$ and $(\tilde u_0,\tilde v_0)$ is negative and not identically equal to $(0,0)$.

\begin{proposition}\label{p2.3}The following statements are valid.
\begin{enumerate}[$(1)$]
  \item Problem \eqref{2.9} has a unique global solution. If $\lambda_p(l)>0$, then the unique solution of \eqref{2.9} will converge to the unique positive solution $(u_l,v_l)$ of \eqref{2.5} in $C([0,l])$ as $t\to\yy$; if $\lambda_p(l)\le0$, then the unique solution of \eqref{2.9} will converge to $(0,0)$ in $C([0,l])$ as $t\to\yy$;
  \item Problem \eqref{2.10} has a unique global solution, which converges to the unique bounded positive solution $(U,V)$ of \eqref{2.4} in $C_{{\rm loc}}([0,\yy))$ as $t\to\yy$.
\end{enumerate}
\end{proposition}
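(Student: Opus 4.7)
The plan is to dispose of existence and uniqueness by a standard fixed-point argument and then to extract the long-time behavior via a monotone-iteration scheme built on sub- and super-solutions, exploiting the cooperative structure ($H'>0$, $G'>0$) and the strict concavity of $H,G$. Existence and uniqueness of a global solution for both \eqref{2.9} and \eqref{2.10} follow because the nonlocal operator $\mathcal{P}$ is bounded and $H,G$ are locally Lipschitz: Picard iteration produces a unique local solution, and a global-in-time upper bound follows from comparison with a spatially constant ODE super-solution $(\bar u(t),\bar v(t))$ solving $\bar u_t=-a\bar u+H(\bar v)$, $\bar v_t=-b\bar v+G(\bar u)$ with sufficiently large constant initial data, using $\int_{\mathbb{R}}J_i\,dy=1$ and the hypothesis $G(H(\hat z)/a)<b\hat z$ from \textbf{(H)}.

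For (1), let $\phi=(\phi_1,\phi_2)\in X^{++}$ denote the positive eigenfunction associated with $\lambda_p(l)$ from Proposition \ref{p2.1}. I will assemble three building blocks. First, a constant pair $(M_1,M_2)$ with $aM_1\ge H(M_2)$ and $bM_2\ge G(M_1)$ (guaranteed by \textbf{(H)}) is a super-solution of \eqref{2.9}. Second, if $\lambda_p(l)>0$, then $\epsilon\phi$ is a sub-solution of \eqref{2.9} for all sufficiently small $\epsilon>0$, since the linearized contribution $\epsilon\lambda_p(l)\phi$ dominates the $O(\epsilon^2)$ correction coming from $H(\epsilon\phi_2)-\epsilon H'(0)\phi_2$ and $G(\epsilon\phi_1)-\epsilon G'(0)\phi_1$. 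Third, if $\lambda_p(l)\le 0$, then for every $K>0$ the pair $K\phi$ is a strict super-solution of \eqref{2.9}, because strict concavity yields $H(K\phi_2)<KH'(0)\phi_2$ (and likewise for $G$) while $K\lambda_p(l)\phi_1\le 0$. A standard monotone-semiflow argument then shows that the solution of \eqref{2.9} starting from a super-solution is nonincreasing in $t$, the one starting from a sub-solution is nondecreasing, and in both cases the limit is a steady state of \eqref{2.5}. When $\lambda_p(l)>0$, the unique positive steady state is $(u_l,v_l)$ (Remark \ref{r2.1}), so both monotone trajectories converge to $(u_l,v_l)$; for a general nonnegative nontrivial datum I first show that the solution becomes strictly positive on $[0,l]$ at some finite $t_0>0$ and then sandwich it between $\epsilon\phi$ and $(M_1,M_2)$. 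When $\lambda_p(l)\le 0$, testing $\mathcal{L}$ against a hypothetical positive steady state of \eqref{2.5} and invoking Lemma \ref{l2.1} together with strict concavity forces $\lambda_p(l)>0$, a contradiction, so the monotone decreasing trajectory from $K\phi$ must converge to $(0,0)$. Dini's theorem upgrades pointwise to $C([0,l])$ convergence throughout.

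For (2) I again sandwich the solution. The constant $(M_1,M_2)$ chosen as above is a super-solution of \eqref{2.10} as well, because $\int_0^\infty J_i(x-y)\,dy\le 1$ for all $x\ge 0$; hence the solution starting from $(M_1,M_2)$ is nonincreasing in $t$ and converges to a bounded nonnegative steady state of \eqref{2.4}. For the lower bound I compare with the solutions of \eqref{2.9} on $[0,l]$: the extra nonnegative contribution $\int_l^\infty J_i(x-y)u(t,y)\,dy$ present in \eqref{2.10} but absent from \eqref{2.9} implies, by comparison, that the solution of \eqref{2.10} restricted to $[0,l]$ dominates the solution of \eqref{2.9} with the truncated initial data; by part (1) the latter converges to $(u_l,v_l)$ in $C([0,l])$, and $(u_l,v_l)\to (U,V)$ locally uniformly as $l\to\infty$ by Proposition \ref{p2.2}. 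Combining both bounds with the uniqueness of the bounded positive solution of \eqref{2.4} (Proposition \ref{p2.2}) yields convergence to $(U,V)$ in $C_{\mathrm{loc}}([0,\infty))$. The main technical obstacle I expect is propagating strict positivity of the solution in finite time so as to squeeze general nontrivial nonnegative initial data in part (1); I plan to handle this via a Duhamel-type integral representation that exploits $J_i(0)>0$ to spread positive mass uniformly across $[0,l]$ within finite time.
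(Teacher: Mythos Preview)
Your proposal is correct and follows essentially the same route the paper intends: the paper's own proof consists of a single sentence deferring the existence/uniqueness to ``fixed point theory and maximum principle'' and the long-time behavior to ``similar arguments as in the proof of \cite[Proposition 3.4]{DN8}''. The argument in \cite{DN8} is precisely the monotone-semiflow scheme you outline---eigenfunction sub-solutions $\ep\phi$ when $\lambda_p(l)>0$, constant super-solutions $(M_1,M_2)$ with $aM_1\ge H(M_2)$ and $bM_2\ge G(M_1)$, nonexistence of positive steady states when $\lambda_p(l)\le 0$ via Lemma \ref{l2.1} and strict concavity, and for the half-line problem the comparison with the bounded-interval problems \eqref{2.9} together with $(u_l,v_l)\to(U,V)$ from Proposition \ref{p2.2}. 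Your sketch in fact supplies more detail than the paper does; the only places that warrant care are (i) confirming that $(M_1,M_2)$ can be chosen to dominate any given bounded initial datum (which follows since $G(H(z)/a)<bz$ for all $z\ge\hat z$ by the concavity in {\bf(H)}), and (ii) the finite-time strict positivity step, which you correctly flag and which is handled exactly as you suggest using $J_i(0)>0$.
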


\begin{proof}
By the fixed point theory and maximum principle, it is easy to show the existence and uniqueness of global solutions for \eqref{2.9} and \eqref{2.10}. For the longtime behaviors, one can use the similar arguments as in the proof of \cite[Proposition 3.4]{DN8} to derive the desired results. The details are omitted here.
\end{proof}

\section{Dynamics of \eqref{1.1}}
In this section, we are going to investigate the dynamics of \eqref{1.1} by using the results in previous section. Firstly, by following the similar lines as in \cite{NV,DN8}, we can prove the well-posedness of \eqref{1.1}, i.e., Theorem \ref{t1.1}. Since the modifications are obvious, we omit the details. Owing to $h'(t)>0$ for all $t\ge0$, $h(t)$ will converge to a finite value or infinity as $t\to\yy$. Set $h_{\yy}=\lim_{t\to\yy}h(t)$. We call the case $h_{\yy}=\yy$ {\it spreading}, and the case $h_{\yy}<\yy$ {\it vanishing}.

We first discuss the longtime behaviors of solution component $(u,v)$ of \eqref{1.1}, which will be divided into two cases: {\it spreading} and {\it vanishing }. Let $\lambda_p(l)$ be the principal eigenvalue of \eqref{2.1} on the domain $[0,l]$.

\begin{lemma}\label{l3.1}If vanishing happens, i.e., $h_{\yy}<\yy$, then $\lambda_p(h_{\yy})\le0$ and
\[\lim_{t\to\yy}\|u(t,x)+v(t,x)\|_{C([0,h(t)])}=0.\]
\end{lemma}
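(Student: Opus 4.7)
The plan is to deduce both conclusions from comparison with the fixed-domain problem \eqref{2.9} and Proposition \ref{p2.3}. The decay of $u+v$ will come from a sub-solution built by zero-extension, while $\lambda_p(h_\yy)\le 0$ will be proved by contradiction via the free-boundary equation.

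To show $\lambda_p(h_\yy)\le 0$, suppose on the contrary that $\lambda_p(h_\yy)>0$. By the strict monotonicity and continuity of $\lambda_p$ with respect to the interval length (Proposition \ref{p2.1}(4)) and the fact that $h(t)\nearrow h_\yy$, there is some $t_0>0$ with $h(t_0)<h_\yy$ and $\lambda_p(h(t_0))>0$. For $t\ge t_0$ and $x\in[0,h(t_0)]$, since $h(t)\ge h(t_0)$ and $J_i\ge 0$, the pair $(u,v)$ restricted to $[0,h(t_0)]$ is a super-solution of the cooperative system \eqref{2.9} with $l=h(t_0)$. The comparison principle together with Proposition \ref{p2.3}(1) then implies that $(u(t,\cdot),v(t,\cdot))|_{[0,h(t_0)]}$ is bounded below by the solution of \eqref{2.9} starting from $(u(t_0,\cdot),v(t_0,\cdot))|_{[0,h(t_0)]}$, which converges to the unique positive solution $(u_{h(t_0)},v_{h(t_0)})$ of \eqref{2.5}. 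Consequently there exist $\delta,\sigma>0$ and $T>t_0$ such that $u(t,x)\ge\delta$ on $[h(t_0)-\sigma,h(t_0)]\times[T,\yy)$. Plugging this into the free-boundary equation and using $J_1(0)>0$ to produce a uniform lower bound $\int_{h(t)}^{\yy}J_1(x-y)\dy\ge c_0>0$ for $x\in[h(t_0)-\sigma,h(t_0)]$ (this requires $t_0$ to be chosen large enough that $h_\yy-h(t_0)$ is small), one obtains $h'(t)\ge c>0$ for $t\ge T$, which contradicts $h_\yy<\yy$.

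For the decay of $u+v$, extend $(u,v)$ by zero to all of $[0,h_\yy]$; the boundary condition $u(t,h(t))=v(t,h(t))=0$ makes the extension $(\hat u,\hat v)$ continuous in $x$. A direct check, splitting the cases $x\in[0,h(t))$ and $x\in(h(t),h_\yy]$, shows that $(\hat u,\hat v)$ is a sub-solution of \eqref{2.9} with $l=h_\yy$ and initial data given by the zero-extension of $(u_0,v_0)$ (admissible because $u_0(h_0)=v_0(h_0)=0$). The comparison principle for this cooperative nonlocal system then yields $(u(t,x),v(t,x))\le(\tilde u(t,x),\tilde v(t,x))$ on $[0,h(t)]$ for all $t>0$, where $(\tilde u,\tilde v)$ is the solution of \eqref{2.9} on $[0,h_\yy]$. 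Since $\lambda_p(h_\yy)\le 0$ has just been established, Proposition \ref{p2.3}(1) gives $(\tilde u,\tilde v)\to(0,0)$ in $C([0,h_\yy])$, whence $\|u(t,\cdot)+v(t,\cdot)\|_{C([0,h(t)])}\to 0$ as $t\to\yy$.

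The main technical obstacle is the first part: obtaining a \emph{uniform} positive lower bound for $h'(t)$. One must choose $t_0$, $\sigma$, and the subinterval near $h(t_0)$ so that the lower bound on $u$ coming from the fixed-domain asymptotics and the lower bound on $\int_{h(t)}^{\yy}J_1(x-y)\dy$ coming from $J_1(0)>0$ hold simultaneously on that subinterval. The second half is essentially routine once the spectral inequality is in hand and the zero-extension is checked to be a sub-solution.
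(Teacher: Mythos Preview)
Your argument is correct and follows essentially the same route as the paper. For the first part (the spectral inequality), the paper does exactly what you describe: it fixes a sub-interval $[0,h_\yy-\ep]$ with $\lambda_p(h_\yy-\ep)>0$, compares with the fixed-domain problem \eqref{2.9} to get a uniform positive lower bound on $(u,v)$ near the right end, and then feeds this into the free-boundary equation to force $h'(t)\ge c>0$, a contradiction.

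For the decay of $u+v$, your zero-extension argument works, but the paper takes a slightly simpler path: rather than checking that the zero-extension is a sub-solution (which requires care at the moving interface), it compares $(u,v)$ directly with the solution $(\bar u,\bar v)$ of \eqref{2.9} on $[0,h_\yy]$ started from the \emph{constant} initial data $(\|u_0\|_{C([0,h_0])},\|v_0\|_{C([0,h_0])})$. This constant dominates $(u,v)$ on $[0,h(t)]$ by the maximum principle, and since $\lambda_p(h_\yy)\le0$ one gets $(\bar u,\bar v)\to(0,0)$ uniformly. This avoids the sub-solution verification entirely.
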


\begin{proof}We first prove that if $h_{\yy}<\yy$, then $\lambda_p(h_{\yy})\le0$. Assume on the contrary that $\lambda_p(h_{\yy})>0$. By the continuity of $\lambda_p(l)$ in $l$, there exist small $\ep>0$ and $\delta>0$ such that $\lambda_p(h_{\yy}-\ep)>0$ and $\min\{J_1(x),J_2(x)\}\ge\delta$ for $|x|\le 2\ep$ due to {\bf (J)}.  Moreover, there is $T>0$ such that $h(t)>h_{\yy}-\ep$ for $t\ge T$. Hence the solution component $(u,v)$ of \eqref{1.1} satisfies
\bess\left\{\begin{aligned}
&u_t\ge d_1\int_0^{h_{\yy}-\ep}J_1(x-y)u(y)\dy-d_1u-au+H(v), & & t>T, ~ ~ x\in[0,h_{\yy}-\ep]\\
&v_t\ge d_2\int_0^{h_{\yy}-\ep}J_2(x-y)v(y)\dy-d_2v-bv+G(u), & & t>T, ~ ~ x\in[0,h_{\yy}-\ep],\\
&u(T,x)>0, ~ ~v(T,x)>0, & & x\in[0,h_{\yy}-\ep].
 \end{aligned}\right.
 \eess
Let $(\underline{u},\underline{v})$ be the unique solution of \eqref{2.9} with $l=h_{\yy}-\ep$, $\tilde{u}_0(x)=u(T,x)$ and $\tilde{v}_0(x)=v(T,x)$. Note that $\lambda_p(h_{\yy}-\ep)>0$. Making use of Proposition \ref{p2.3} we have $(\underline{u},\underline{v})\to(u_l,v_l)$ in $X$ as $t\to\yy$, where $(u_l,v_l)$ is the unique positive solution of \eqref{2.5} with $l=h_{\yy}-\ep$.

By a comparison argument, we see $u(t+T,x)\ge \underline{u}(t,x)$ and $v(t+T,x)\ge \underline{v}(t,x)$ for $x\in[0,h_{\yy}-\ep]$. Therefore, $\liminf_{t\to\yy}(u(t,x),v(t,x))\ge(u_l,v_l)$ uniformly in $[0,h_{\yy}-\ep]$. Moreover, there exist small $\sigma>0$ and large $T_1\gg T$ such that $u(t,x)\ge\sigma$ and $v(t,x)\ge\sigma$ for $t\ge T_1$ and $[0,h_{\yy}-\ep]$. In view of the equation of $h(t)$, we have, for $t>T_1$,
 \bess
 h'(t)\ge\sigma\int_{h_{\yy}-\frac{3\ep}2}^{h_{\yy}-\ep}\int_{h_{\yy}}^{h_{\yy}
 +\frac{\ep}2}\big[\mu_1J_1(x-y)+\mu_2J_2(x-y)\big]\dy\dx\ge(\mu_1+\mu_2)\delta\sigma,
 \eess
which clearly contradicts $h_{\yy}<\yy$. Thus $\lambda_p(h_{\yy})\le0$.

 Let $(\bar{u},\bar{v})$ be the solution of \eqref{2.9} with $l=h_{\yy}$, $\tilde{u}_0(x)=\|u_0\|_{C([0,h_0])}$ and $\tilde{v}_0(x)=\|v_0\|_{C([0,h_0])}$. Clearly, $\bar{u}(t,x)\ge u(t,x)$ and $\bar{v}\ge v(t,x)$ for $t\ge0$ and $x\in[0,h(t)]$. Note that $\lambda_p(h_{\yy})\le0$. Thus $(\bar{u},\bar{v})\to(0,0)$ uniformly in $[0,l]$ as $t\to\yy$, which implies that $(u,v)\to(0,0)$ uniformly in $[0,h(t)]$ as $t\to\yy$. The proof is ended.
\end{proof}

Then we turn to the case {\it spreading}.

\begin{lemma}\label{l3.2}If spreading happens, namely, $h_{\yy}=\yy$, then
\[\lim_{t\to\yy}(u(t,x),v(t,x))=(U(x),V(x)) ~ ~ {\rm locally ~ uniformly ~ in ~ }[0,\yy),\]
where $(U,V)$ is the unique bounded positive solution of \eqref{2.4}.
\end{lemma}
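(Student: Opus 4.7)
The strategy is a classical squeeze: build an upper bound via a solution of the half-line problem \eqref{2.10} and a lower bound via solutions of the bounded-domain problem \eqref{2.9} on intervals $[0,L]$ with $L\to\infty$, then invoke the longtime dynamics in Proposition \ref{p2.3} and the monotone construction of $(U,V)$ from Proposition \ref{p2.2}.

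\textbf{Upper bound.} Extend $(u(t,\cdot),v(t,\cdot))$ by zero to $(h(t),\infty)$. The extension is continuous since $u(t,h(t))=v(t,h(t))=0$, and because the integrands are non-negative,
\[\int_0^{h(t)}J_i(x-y)u(t,y)\,dy=\int_0^\infty J_i(x-y)\tilde u(t,y)\,dy,\]
and similarly for $\tilde v$. A direct check at interior and exterior points shows that $(\tilde u,\tilde v)$ is a subsolution of \eqref{2.10} on $[0,\infty)$. Let $(\bar u,\bar v)$ be the solution of \eqref{2.10} with constant initial data $(M_1,M_2)$ supplied by Theorem \ref{t1.1}, which dominates $(u_0,v_0)$. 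The comparison principle for the cooperative system \eqref{2.10} gives $(u,v)\le(\bar u,\bar v)$, while Proposition \ref{p2.3}(2) yields $(\bar u,\bar v)\to(U,V)$ in $C_{\rm loc}([0,\infty))$. Hence $\limsup_{t\to\infty}(u,v)\le(U,V)$ locally uniformly in $[0,\infty)$.

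\textbf{Lower bound.} Since spreading forces $\mathcal{R}_0>1$ (established in the proof of Theorem \ref{t1.2} by combining Lemma \ref{l3.1} with the vanishing criterion of Theorem \ref{t1.3}(1)), we have $\gamma_A>0$, and by Proposition \ref{p2.1}(4) there is $L_0$ with $\lambda_p(L)>0$ whenever $L\ge L_0$. Fix such an $L$ and pick $T_L$ large enough that $h(t)\ge L$ for $t\ge T_L$. On $[0,L]$ the solution obeys
\[u_t\ge d_1\int_0^LJ_1(x-y)u\,dy-(d_1+a)u+H(v),\qquad v_t\ge d_2\int_0^LJ_2(x-y)v\,dy-(d_2+b)v+G(u),\]
with strictly positive initial profile at $t=T_L$ by Theorem \ref{t1.1}. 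Let $(\underline u_L,\underline v_L)$ solve \eqref{2.9} on $[0,L]$ with that profile; comparison gives $(u(t+T_L,\cdot),v(t+T_L,\cdot))\ge(\underline u_L,\underline v_L)$ on $[0,L]$, and Proposition \ref{p2.3}(1) yields $(\underline u_L,\underline v_L)\to(u_L,v_L)$ uniformly on $[0,L]$, where $(u_L,v_L)$ is the unique positive solution of \eqref{2.5}. Therefore $\liminf_{t\to\infty}(u,v)\ge(u_L,v_L)$ on $[0,L]$.

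\textbf{Conclusion and main obstacle.} The proof of Proposition \ref{p2.2} shows $(u_L,v_L)$ is monotone nondecreasing in $L$ and converges pointwise to $(U,V)$; monotone convergence to a continuous limit on a compact set (Dini) upgrades this to local uniform convergence on $[0,\infty)$. Letting first $t\to\infty$ and then $L\to\infty$ yields $\liminf(u,v)\ge(U,V)$ locally uniformly, which combined with the upper bound completes the proof. The main technical point I expect is verifying rigorously that the zero-extension of $(u,v)$ across the moving free boundary is a legitimate subsolution of the half-line problem \eqref{2.10}, so that Proposition \ref{p2.3}(2) may be applied in the comparison; once this is in hand the squeeze runs smoothly on the cooperative structure already built up in Section \ref{s2}.
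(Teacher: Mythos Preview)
Your proposal is correct and follows essentially the same squeeze strategy as the paper: an upper barrier via the half-line problem \eqref{2.10} with large constant initial data, and a lower barrier via \eqref{2.9} on $[0,L]$ using $\lambda_p(L)>0$ and the convergence $(u_L,v_L)\to(U,V)$ from Proposition \ref{p2.2}. The only cosmetic differences are that the paper compares directly by restricting $(\bar u,\bar v)$ to $[0,h(t)]$ rather than zero-extending $(u,v)$, and handles the passage $L\to\infty$ with an explicit $\ep$-argument instead of invoking Dini.
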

\begin{proof}Since $h_{\yy}=\yy$, we have $\mathcal{R}_0>1$ (see Lemma \ref{l3.3} below).
Let $(\bar{u},\bar{v})$ be the unique global solution of \eqref{2.10} with $(\tilde{u}_0,\tilde{v}_0)=(\|u_0\|_{C([0,h_0])},\|v_0\|_{C([0,h_0])})$. Then using a comparison consideration yields that $\bar{u}(t,x)\ge u(t,x)$ and $\bar{v}\ge v(t,x)$ for $t\ge0$ and $x\in[0,h(t)]$. Moreover, in view of Proposition \ref{p2.3}, we have $(\bar{u},\bar{v})\to(U,V)$ locally uniformly in $[0,\yy)$ as $t\to\yy$. So we obtain
\bes\label{l3.1}
\limsup_{t\to\yy}(u(t,x),v(t,x))\le (U(x),V(x)) {\rm ~ ~ locally ~ uniformly ~ in ~ }[0,\yy).
\ees

It now remains to the lower limit of $(u,v)$. Notice that the unique positive solution $(u_l,v_l)$ of \eqref{2.5} converges to $(U,V)$ locally uniformly in $[0,\yy)$ as $l\to\yy$. For any $l_0>0$ and small $\ep>0$, there exists a large $L>0$ such that for all $l\ge L$,
\[(u_l(x),v_l(x))\ge(U(x)-\frac{\ep}{2},V(x)-\frac{\ep}{2}) {\rm ~ ~ for ~ }x\in[0,l_0].\]
Moreover, due to $h_{\yy}=\yy$, there exists $T>0$ such that $h(t)>L$ for $t\ge T$. Hence the solution component $(u,v)$ of \eqref{1.1} satisfies
\bess\left\{\begin{aligned}
&u_t\ge d_1\int_0^{L}J_1(x-y)u(y)\dy-d_1u-au+H(v), & & t>T, ~ ~ x\in[0,L]\\
&v_t\ge d_2\int_0^{L}J_2(x-y)v(y)\dy-d_2v-bv+G(u), & & t>T, ~ ~ x\in[0,L],\\
&u(T,x)>0, ~ ~v(T,x)>0, & & x\in[0,L].
 \end{aligned}\right.
 \eess
Let $(\underline{u},\underline{v})$ be the unique solution of \eqref{2.9} with $l=L$, $\tilde{u}_0(x)=u(T,x)$ and $\tilde{v}_0(x)=v(T,x)$. Note that $L$ is large enough such that $\lambda_p(L)>0$. By Proposition \ref{p2.3}, we have that as $t\to\yy$,
\[(\ud u(t,x),\ud v(t,x))\to(u_L(x),v_L(x)) {\rm ~ ~ uniformly ~ in ~ }[0,L].\]
Thus there exists $T_1\gg T$ such that for $t\ge T_1$,
\[(\ud u(t,x),\ud v(t,x))\ge(u_L(x)-\frac{\ep}{2},v_L(x)-\frac{\ep}{2}) {\rm ~ ~ uniformly ~ in ~ }[0,L].\]
Moreover, from a comparison argument, it follows that $u(t+T,x)\ge \underline{u}(t,x)$ and $v(t+T,x)\ge \underline{v}(t,x)$ for $t\ge0$ and $x\in[0,L]$. Hence for $t\ge T_1$, we have
\[(u(t,x),v(t,x))\ge(U(x)-\ep,V(x)-\ep) {\rm ~ ~ in ~ }[0,l_0],\]
which, together with the arbitrariness of $\ep$, yields that
\[\liminf_{t\to\yy}(u(t,x),v(t,x))\ge(U(x),V(x)) {\rm ~ ~ locally ~ uniformly ~ in ~ }[0,\yy).\]
Therefore, the proof is finished.
\end{proof}

Clearly, Theorem \ref{t1.2} follows from Lemmas \ref{l3.1}-\ref{l3.2}.

Now we study the criteria governing spreading and vanishing.

\begin{lemma}\label{l3.3} If $\mathcal{R}_0\le1$, then vanishing happens. Particularly,
 \bess
 h_{\yy}\le h_0+\frac 1{\min\kk\{d_1/\mu_1,\, H'(0)d_2/(b\mu_2)\rr\}}\int_0^{h_0}\left(u_0(x)
+\frac{H'(0)}{b}v_0(x)\right)\dx.\eess
\end{lemma}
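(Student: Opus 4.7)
The plan is to derive a differential inequality for a weighted mass functional of $(u,v)$ and then combine it with the free boundary equation. Introduce
\[
E(t):=\int_0^{h(t)}\!\left(u(t,x)+\frac{H'(0)}{b}\,v(t,x)\right)\!\dx,
\]
and aim for $E'(t)\le -c\,h'(t)$ with $c:=\min\{d_1/\mu_1,\,H'(0)d_2/(b\mu_2)\}$. Once this is established, integrating on $[0,t]$ and using $E(t)\ge 0$ gives $c(h(t)-h_0)\le E(0)$, which is exactly the claimed quantitative bound; in particular $h_{\yy}<\yy$, so by Theorem~\ref{t1.2} vanishing takes place.

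To extract the main inequality I first differentiate $\int_0^{h(t)}u\,\dx$. Because $u(t,h(t))=0$, the Leibniz boundary contribution vanishes, so the derivative equals the integral over $[0,h(t)]$ of the $u$-equation right-hand side. Applying Fubini to the nonlocal diffusion double integral and using $\int_{\R}J_1=1$, the diffusion piece collapses into $-d_1\int_0^{h(t)}u(t,y)\alpha_1(t,y)\dy$ with $\alpha_1(t,y)=\int_{-\yy}^{-y}J_1(z)\,{\rm d}z+\int_{h(t)-y}^{\yy}J_1(z)\,{\rm d}z$. The first summand is non-negative and can be discarded to enlarge the expression; the second, via the evenness of $J_1$ and a change of variable, equals $(1/\mu_1)h'_1(t)$, where $h'_1(t)$ denotes the $u$-contribution to $h'(t)$. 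Hence
\[
\frac{d}{dt}\!\int_0^{h(t)}\!u\,\dx\le -\frac{d_1}{\mu_1}h'_1(t)-a\!\int_0^{h(t)}\!u\,\dx+\!\int_0^{h(t)}\!H(v)\,\dx,
\]
with the analogous inequality for $\int v\,\dx$ featuring $d_2/\mu_2$, $h'_2(t)$, $-bv$ and $G(u)$, and $h'(t)=h'_1(t)+h'_2(t)$.

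Next I add the $v$-inequality to the $u$-inequality with weight $H'(0)/b$. The concavity of $H$ and $G$ with $H(0)=G(0)=0$ yields the tangent-line bounds $H(v)\le H'(0)v$ and $G(u)\le G'(0)u$ on $[0,\yy)$. Substituting, the four reaction terms aggregate into
\[
\left(-a+\frac{H'(0)G'(0)}{b}\right)\!\int_0^{h(t)}\!u\,\dx+\bigl(H'(0)-H'(0)\bigr)\!\int_0^{h(t)}\!v\,\dx.
\]
The weight $H'(0)/b$ was chosen precisely so that the $v$-coefficient vanishes, and the hypothesis $\mathcal{R}_0\le 1$ is exactly what forces the $u$-coefficient to be non-positive. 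This yields $E'(t)\le -(d_1/\mu_1)h'_1(t)-(H'(0)d_2/(b\mu_2))h'_2(t)\le -c\,h'(t)$, as desired.

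The only delicate step is the Fubini bookkeeping that identifies the diffusion mass-loss $-d_1\int u\,\alpha_1$ with a scalar multiple of the corresponding flux in $h'(t)$; once that identification is clean, the concavity-based linear bounds on $H,G$ and the sign of $\mathcal{R}_0-1$ do the rest. After integrating and substituting $E(0)=\int_0^{h_0}\!\bigl(u_0+\tfrac{H'(0)}{b}v_0\bigr)\dx$, both the vanishing conclusion and the quantitative estimate for $h_{\yy}$ drop out.
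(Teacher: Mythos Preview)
Your proof is correct and follows essentially the same approach as the paper: both define the weighted mass $E(t)=\int_0^{h(t)}\bigl(u+\tfrac{H'(0)}{b}v\bigr)\dx$, use Fubini together with $\int_{\R}J_i=1$ to bound the diffusion contribution by $-c\,h'(t)$ (discarding the nonnegative mass loss through $x=0$), apply the concavity bounds $H(v)\le H'(0)v$, $G(u)\le G'(0)u$ together with $\mathcal{R}_0\le 1$ to kill the reaction terms, and integrate. One cosmetic remark: invoking Theorem~\ref{t1.2} at the end is unnecessary (and its proof is logically prior to, not dependent on, this lemma anyway), since in this paper \emph{vanishing} is simply defined as $h_\yy<\yy$.
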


\begin{proof} It is easy to see that
 \bess
&& \int_0^{h(t)}\!\!\int_0^{h(t)}\!J_1(x-y)u(t,y)\dy\dx-\int_0^{h(t)}\!u(t,x)\dx\\
&=& \int_0^{h(t)}\!\!\int_0^{h(t)}\!J_1(x-y)u(t,y)\dy\dx-\int_0^{h(t)}\int_{-\yy}^{\yy}\!J_1(x-y)u(t,x)\dy\dx\\
&=&-\int_0^{h(t)}\int_{-\yy}^{0}\!J_1(x-y)u(t,x)\dy\dx-\int_0^{h(t)}\!\!\int_{h(t)}^\yy\!J_1(x-y)u(t,x)\dx\dy\\
&\le&-\int_0^{h(t)}\!\!\int_{h(t)}^\yy\!J_1(x-y)u(t,x)\dx\dy.
\eess
Similarly,
\bess
\int_0^{h(t)}\!\!\int_0^{h(t)}\!J_2(x-y)v(t,y)\dy\dx-\int_0^{h(t)}\!v(t,x)\dx
&\le&-\int_0^{h(t)}\!\!\int_{h(t)}^\yy\!J_2(x-y)v(t,x)\dx\dy.
 \eess
 Note that $\mathcal{R}_0<1$. Simple calculations show that for $t>0$ and $x\in[0,h(t)]$,
 \[H(v(t,x))-au(t,x)-H'(0)v(t,x)+\frac{H'(0)}{b}G(u(t,x))\le0.\]

Hence, after a series of simple computations, we have
\bess
 &&\frac{\rm d}{{\rm d}t}\!\int_0^{h(t)}\!\!\left(u(t,x)+\frac{H'(0)}{b}v(t,x)\right)\dx\\
&\le&-\int_0^{h(t)}\!\!\!\int_{h(t)}^{\yy}\!\!\kk(d_1J_1(x-y)u(t,x)+\frac{H'(0)d_2}{b}
J_2(x-y)v(t,x)\!\rr)\!\dy\dx\\[1mm]
&&+\int_0^{h(t)}\kk(H(v(t,x))-au(t,x)-H'(0)v(t,x)+\frac{H'(0)}{b}G(u(t,x))\rr)\dx\\[1mm]
&<&-\min\kk\{\frac{d_1}{\mu_1},\, \frac{H'(0)d_2}{b\mu_2}\rr\}h'(t).
\eess
Integrating the above inequality from $0$ to $t$ completes the proof.
\end{proof}
Next we consider the case $\mathcal{R}_0>1$. All arguments used below tightly depend on the fact that if vanishing happens, then $\lambda_p(h_{\yy})\le0$ as in Lemma \ref{l3.1}. Here we recall
\bess
&&\gamma_A=\frac{-a-b+\sqrt{(a-b)^2
+4H'(0)G'(0)}}2,\\[1mm]
&&\gamma_B=\frac{-a-d_1-b-d_2
+\sqrt{(a+d_1-b-d_2)^2+4H'(0)G'(0)}}2.
\eess
It is clear that $\gamma_B\ge0$ if and only if
\[\mathcal{R}_*:=\frac{H'(0)G'(0)}{(a+d_1)(b+d_2)}\ge1.\]

\begin{lemma}\label{l3.4}If $\mathcal{R}_*\ge1$, then spreading occurs.
\end{lemma}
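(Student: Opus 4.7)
My plan is to argue by contradiction, leveraging the sharp asymptotics of the principal eigenvalue $\lambda_p(l)$ as $l\to 0^+$ obtained in Proposition \ref{p2.1}(4) together with the necessary condition for vanishing proved in Lemma \ref{l3.1}.

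First I would translate the assumption $\mathcal{R}_*\ge 1$ into the spectral language of the previous section: by the explicit formula for $\gamma_B$ in \eqref{2.2}, one checks directly that $\mathcal{R}_*\ge 1$ is equivalent to $\gamma_B\ge 0$. This is the bridge between the hypothesis and the eigenvalue framework.

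Next I would observe that Proposition \ref{p2.1}(4) yields two facts about $\lambda_p(l):=\lambda_p(0,l)$: it is strictly increasing in $l$, and $\lambda_p(l)\to\gamma_B$ as $l\to 0^+$. Strict monotonicity combined with this one-sided limit forces $\lambda_p(l)>\gamma_B$ for every $l>0$. In particular, under our hypothesis, $\lambda_p(l)>0$ for all $l>0$, and by continuity and monotonicity the same inequality holds for any length, however small.

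Now I would argue by contradiction. Suppose vanishing occurs, i.e.\ $h_\yy<\yy$. Since $h_0>0$ and $h'(t)>0$, we have $h_\yy\ge h_0>0$, so the previous paragraph gives $\lambda_p(h_\yy)>0$. On the other hand, Lemma \ref{l3.1} asserts that vanishing forces $\lambda_p(h_\yy)\le 0$. This contradiction rules out vanishing, and since the dynamics satisfy the dichotomy spreading/vanishing (Theorem \ref{t1.2}), spreading must occur.

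I do not expect a real obstacle here: the heavy lifting was done in Proposition \ref{p2.1}(4), where the limit $\lambda_p(l)\to\gamma_B$ as $l\to 0$ was established, and in Lemma \ref{l3.1}, which supplied the necessary condition $\lambda_p(h_\yy)\le 0$ for vanishing. The only small point to be careful about is to invoke \emph{strict} monotonicity (not merely non-decreasing behavior) to convert the one-sided limit into the strict inequality $\lambda_p(l)>\gamma_B$ for all $l>0$; this is exactly what Proposition \ref{p2.1}(4) provides.
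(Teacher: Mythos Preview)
Your proposal is correct and follows essentially the same approach as the paper: translate $\mathcal{R}_*\ge 1$ into $\gamma_B\ge 0$, use Proposition~\ref{p2.1}(4) (limit at $l\to 0^+$ plus strict monotonicity) to get $\lambda_p(l)>\gamma_B\ge 0$ for all $l>0$, and then conclude via Lemma~\ref{l3.1} that vanishing is impossible. The only cosmetic difference is that the paper phrases the last step directly rather than by contradiction and does not explicitly invoke Theorem~\ref{t1.2} (which is unnecessary since ``not vanishing'' already means $h_\infty=\infty$).
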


\begin{proof} As we see above, the condition $\mathcal{R}_*\ge1$ implies $\gamma_B\ge 0$.
Owing to Proposition \ref{p2.1}, we have $\lambda_p(l)>\gamma_B\ge 0$ for all $l>0$. It then follows from Lemma \ref{l3.1} that spreading happens. The proof is ended.
\end{proof}

In what follows, we focus on the case $\mathcal{R}_*<1<\mathcal{R}_0$. Making use of Proposition \ref{p2.1}, we have $\lim_{l\to\yy}\lambda_p(l)=\gamma_A>0$, and $\lim_{l\to 0}\lambda_p(l)=\gamma_B <0$. By the monotonicity of $\lambda_p(l)$, there exists a unique $\ell^*>0$ such that $\lambda_p(\ell^*)=0$ and $\lambda_p(l)(l-\ell^*)>0$ for $l\neq\ell^*$. Then as a consequence of Lemma \ref{l3.1}, we have the following result.

\begin{lemma}\label{l3.5}Let $\ell^*$ be defined as above. If $h_0\ge\ell^*$, then spreading happens.
\end{lemma}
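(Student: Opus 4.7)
The strategy is a short argument by contradiction that combines the defining property of $\ell^*$ with the necessary condition for vanishing already proved in Lemma \ref{l3.1}. I do not foresee any genuine obstacle, since all heavy lifting (monotonicity of $\lambda_p(l)$ in $l$, strict positivity of $h'(t)$, and the implication ``vanishing $\Rightarrow \lambda_p(h_\yy)\le 0$'') has been done in the preceding lemmas.

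Specifically, I would suppose for contradiction that spreading does not occur under the hypothesis $h_0\ge\ell^*$. Then vanishing must take place, so $h_\yy<\yy$, and Lemma \ref{l3.1} gives $\lambda_p(h_\yy)\le 0$. On the other hand, Theorem \ref{t1.1} guarantees $h'(t)>0$ for every $t\ge 0$, so $h(t)$ is strictly increasing; in particular $h_\yy>h_0\ge \ell^*$. By the strict monotonicity of $l\mapsto \lambda_p(l)$ established in Proposition \ref{p2.1}(4), together with the defining identity $\lambda_p(\ell^*)=0$, this yields $\lambda_p(h_\yy)>\lambda_p(\ell^*)=0$, which contradicts $\lambda_p(h_\yy)\le 0$ obtained above. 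Hence $h_\yy=\yy$ and spreading happens.

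The only subtle points to double-check are that $h_\yy$ strictly exceeds $h_0$ (not merely $h_\yy\ge h_0$), which follows at once from $h'(t)>0$ for all $t\ge 0$, and that the inequality $\lambda_p(h_\yy)>\lambda_p(\ell^*)$ is strict, which is exactly the content of the strict monotonicity clause in Proposition \ref{p2.1}(4). Both are already built into the framework of Section \ref{s2}, so the proof reduces to a one-paragraph contradiction.
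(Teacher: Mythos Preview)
Your proposal is correct and is exactly the argument the paper has in mind: the paper simply states that Lemma~\ref{l3.5} is ``a consequence of Lemma~\ref{l3.1}'' without writing out the details, and your paragraph fills in precisely those details (strict increase of $h$, strict monotonicity of $\lambda_p(l)$, and the contradiction with $\lambda_p(h_\yy)\le 0$).
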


The next result shows that if $h_0<\ell^*$ and $\mu_1+\mu_2$ small enough, then vanishing occurs. Before stating this result, we need a comparison principle for \eqref{1.1}. Since its proof is similar to those of \cite[Lemma 3.2]{DN8}, we omit the details here.

\begin{proposition}[Comparison principle]\label{p3.1}\, Let $J_i$ satisfy the condition {\bf(J)} for $i=1,2$. If $\bar{h}\in C^1([0,T])$, $\bar{u},\,\bar{v},\, \bar{u}_{t}, \,\bar{v}_t\in C([0,T]\times[0,\bar{h}(t)])$ and satisfy
\bess
\left\{\begin{aligned}
&\bar u_{t}\ge d_1\dd\int_{0}^{\bar h(t)}J_1(x-y)\bar u(t,y)\dy-d_1\bar u-a\bar{u}+H(\bar{v}), && 0<t\le T,~0\le x<\bar{h}(t),\\
&\bar v_t\ge d_2\dd\int_{0}^{\bar h(t)}J_2(x-y)\bar v(t,y)\dy-d_2\bar v-b\bar{v}+G(\bar{u}), && 0<t\le T,~0\le x<\bar{h}(t),\\
&\bar u(t,\bar{h}(t))\ge0, ~ \bar{v}(t,h(t))\ge0 &&0<t\le T,\\
&\bar h'(t)\ge\int_0^{\bar h(t)}\!\int_{\bar h(t)}^{\yy}\big[\mu_1 J_1(x-y)\bar u(t,x)
+\mu_2 J_2(x-y)\bar v(t,x)\big]\dy\dx, &&0<t\le T,\\
&\bar h(0)\ge h_0,\;\;\bar u(0,x)\ge u_{0}(x),\;\;\bar{v}(0,x)\ge v_0(x),&& 0\le x\le h_0,
\end{aligned}\right.
 \eess
 then the unique solution $(u,v,h)$ of \eqref{1.1} satisfies
 \[u(t,x)\le\bar{u}(t,x), ~ v(t,x)\le \bar{v}(t,x), ~ h(t)\le\bar{h}(t) ~ ~ {\rm for} ~ 0\le t\le T, ~ 0\le x\le h(t).\]
\end{proposition}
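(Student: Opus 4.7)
The plan is to simultaneously establish $h\le\bar h$ on $[0,T]$ and $u\le\bar u$, $v\le\bar v$ on the common domain $[0,h(t)]$, following the standard route for free boundary problems with nonlocal diffusion: perturb the solution of \eqref{1.1} so that all hypothesized inequalities become strict, rule out the first contact of the two free boundaries by a contradiction argument using the ODE for $h$, and pass to the limit. The cooperative structure $H'>0$ and $G'>0$ from {\bf(H)} is essential throughout.

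For small $\ep>0$, I would introduce $(u^\ep,v^\ep,h^\ep)$ as the unique solution of \eqref{1.1} with $h_0$, $\mu_i$ replaced by $h_0-\ep$, $(1-\ep)\mu_i$, and with smooth initial data $(u_0^\ep,v_0^\ep)\le(u_0,v_0)$ supported in $[0,h_0-\ep]$. Continuous dependence on these parameters (essentially contained in the proof of Theorem \ref{t1.1}) yields $(u^\ep,v^\ep,h^\ep)\to(u,v,h)$ locally uniformly on $[0,T]$ as $\ep\to 0$, so it suffices to show, for each fixed small $\ep$, that $h^\ep<\bar h$ on $[0,T]$ and $u^\ep\le\bar u$, $v^\ep\le\bar v$ on the common domain. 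Define $T^\ep:=\sup\{s\in[0,T]: h^\ep(t)<\bar h(t)\text{ on }[0,s]\}$; we have $T^\ep>0$ since $h^\ep(0)=h_0-\ep<h_0\le\bar h(0)$.

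On $[0,T^\ep]$, setting $w=\bar u-u^\ep$ and $z=\bar v-v^\ep$ on $[0,h^\ep(t)]\subset[0,\bar h(t)]$ and applying the mean value theorem to $H,G$, the pair $(w,z)$ satisfies the cooperative nonlocal inequality system
\[
\begin{cases}
w_t\ge d_1\int_0^{h^\ep(t)}J_1(x-y)w(t,y)\dy-(d_1+a)w+H'(\xi)z, \\
z_t\ge d_2\int_0^{h^\ep(t)}J_2(x-y)z(t,y)\dy-(d_2+b)z+G'(\eta)w,
\end{cases}
\]
with $w(0,\cdot),z(0,\cdot)\ge0$ and boundary values $w(t,h^\ep(t))\ge0$, $z(t,h^\ep(t))\ge0$ (the latter because $u^\ep,v^\ep$ vanish at $x=h^\ep(t)$, while $\bar u,\bar v\ge0$ by the given inequalities). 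Using the substitution $(w,z)=e^{Kt}(\tilde w,\tilde z)$ with $K$ large and running a Gronwall estimate jointly on the negative parts of $\tilde w$ and $\tilde z$ — needed because of the coupling $H'(\xi)z$ and $G'(\eta)w$ — yields $w,z\ge0$ throughout $[0,T^\ep]\times[0,h^\ep(t)]$.

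If $T^\ep<T$, then $h^\ep(T^\ep)=\bar h(T^\ep)=:\ell$. Using the integral expressions for $(h^\ep)'(T^\ep)$ and $\bar h'(T^\ep)$, the ordering $(u^\ep,v^\ep)\le(\bar u,\bar v)$ at $t=T^\ep$ just proved, the factor $(1-\ep)<1$ appearing in $(h^\ep)'$, and the strict positivity of $(u^\ep(T^\ep,\cdot),v^\ep(T^\ep,\cdot))$ on $[0,\ell)$ from Theorem \ref{t1.1}, I would obtain $(h^\ep)'(T^\ep)<\bar h'(T^\ep)$, contradicting the fact that $h^\ep-\bar h$ reaches $0$ from below at $T^\ep$. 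Hence $T^\ep=T$, and letting $\ep\to 0$ concludes. The main obstacle is the cooperative maximum principle for the nonlocal system on the time-varying domain $[0,h^\ep(t)]$: the coupling terms preclude a scalar argument, but since $u^\ep,v^\ep$ vanish at the free boundary, no uncontrolled mass enters through $x=h^\ep(t)$, so a joint Gronwall estimate on the negative parts of $w$ and $z$ closes the argument.
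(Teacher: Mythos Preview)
Your proposal is correct and follows exactly the standard approach referenced by the paper, which omits the proof and cites \cite[Lemma 3.2]{DN8}; the $\ep$-perturbation of $\mu_i$ and $h_0$, the first-touching-time argument for the free boundaries, and the cooperative maximum principle on $[0,h^\ep(t)]$ are precisely the ingredients used there and in \cite{CDLL}. One small caveat: the nonnegativity $\bar u,\bar v\ge 0$ on $[0,\bar h(t)]$ that you invoke to drop $\int_{h^\ep}^{\bar h}J_i\,(\cdot)\,\dy$ and to get the boundary sign at $x=h^\ep(t)$ does not literally follow from the displayed inequalities alone (nothing is assumed about $\bar u(0,\cdot),\bar v(0,\cdot)$ on $(h_0,\bar h(0)]$); in the references it is either an explicit hypothesis or holds automatically for the supersolutions actually constructed, so you should state it as an assumption rather than claim it is implied.
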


According to Proposition \ref{p3.1}, we can see that the unique solution $(u,v,h)$ of \eqref{1.1} is increasing in $\mu_1$ and $\mu_2$, respectively.

\begin{lemma}\label{l3.6}If $h_0<\ell^*$, then there exists a $\ud\mu>0$ such that vanishing happens if $\mu_1+\mu_2\le\ud\mu$.
\end{lemma}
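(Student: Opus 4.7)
\textbf{Proof plan for Lemma \ref{l3.6}.} The strategy is a standard upper-solution / comparison argument tailored to the nonlocal setting: I will construct a supersolution triple $(\bar u,\bar v,\bar h)$ whose free boundary $\bar h(t)$ remains strictly below $\ell^*$ for all $t$, and then invoke Proposition \ref{p3.1}. Since $\lambda_p(l)$ is continuous and strictly increasing in $l$ with $\lambda_p(\ell^*)=0$, the hypothesis $h_0<\ell^*$ lets me pick $h_1\in(h_0,\ell^*)$ with $\lambda_p(h_1)<0$. Let $\phi=(\phi_1,\phi_2)\in X^{++}$ denote the corresponding positive eigenfunction on $[0,h_1]$, normalized so that $\|\phi\|_X=1$; in particular $\phi_1,\phi_2$ have a positive lower bound on $[0,h_0]\subset[0,h_1]$.

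The ansatz will be
\[
\bar h(t)=h_0+(h_1-h_0)(1-e^{-\delta t}),\qquad \bar u(t,x)=Ke^{-\delta t}\phi_1(x),\qquad \bar v(t,x)=Ke^{-\delta t}\phi_2(x),
\]
for small $\delta>0$ and large $K>0$. First I fix $\delta\in(0,-\lambda_p(h_1)]$. Next I check the differential inequalities: since $\bar h(t)\le h_1$ and $\phi_i\ge0$, one has $\int_0^{\bar h(t)}J_1(x-y)\phi_1(y)\dy\le\int_0^{h_1}J_1(x-y)\phi_1(y)\dy$, while the concavity of $H$ with $H(0)=0$ gives $H(Ke^{-\delta t}\phi_2)\le H'(0)Ke^{-\delta t}\phi_2$. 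Combining these two bounds with the eigenvalue equation $\mathcal{L}[\phi]=\lambda_p(h_1)\phi$ on $[0,h_1]$ yields
\[
d_1\!\int_0^{\bar h(t)}\!J_1(x-y)\bar u(t,y)\dy-d_1\bar u-a\bar u+H(\bar v)\le Ke^{-\delta t}\lambda_p(h_1)\phi_1(x)\le -\delta K e^{-\delta t}\phi_1(x)=\bar u_t,
\]
and symmetrically for the $\bar v$-equation. Then $K$ is chosen large enough that $K\phi_i(x)\ge\tilde w_{0,i}(x)$ on $[0,h_0]$ for $\tilde w_{0,1}=u_0$, $\tilde w_{0,2}=v_0$, which is possible because $\phi_i$ is bounded below on $[0,h_0]$ by a positive constant.

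It remains to verify the free-boundary inequality. Using $\int_{\bar h(t)}^{\infty}J_i(x-y)\dy\le 1$ and $x\in[0,\bar h(t)]\subset[0,h_1]$, I bound
\[
\int_0^{\bar h(t)}\!\!\int_{\bar h(t)}^{\infty}\!\bigl[\mu_1J_1(x-y)\bar u(t,x)+\mu_2J_2(x-y)\bar v(t,x)\bigr]\dy\dx\le Ke^{-\delta t}(\mu_1+\mu_2)\,h_1\,C_0,
\]
where $C_0=\max(\|\phi_1\|_\infty,\|\phi_2\|_\infty)$. On the other hand $\bar h'(t)=\delta(h_1-h_0)e^{-\delta t}$, so it suffices to require $\mu_1+\mu_2\le\underline{\mu}:=\delta(h_1-h_0)/(Kh_1C_0)$. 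With all inequalities verified, $\bar h(0)=h_0$ and $\bar u(0,x)\ge u_0$, $\bar v(0,x)\ge v_0$, Proposition \ref{p3.1} gives $h(t)\le\bar h(t)<h_1<\ell^*$ for every $t$, so $h_\infty\le h_1<\infty$, i.e.\ vanishing.

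No step is genuinely hard here: the main points to be careful with are (i) ensuring $\delta$ is chosen before $K$ so that the PDE inequality holds independently of the scaling, and (ii) keeping track of the order of choices so that the upper bound on $\mu_1+\mu_2$ is strictly positive (which it is, since $K$, $\delta$, $h_1-h_0$, $C_0$, and $h_1$ are all positive constants fixed by the preceding selections). The mildly delicate ingredient is the concavity estimate $H(s)\le H'(0)s$ together with the restriction of the eigenvalue equation to the shrunken domain $[0,\bar h(t)]$, but both are immediate from hypothesis \textbf{(H)} and the nonnegativity of $\phi_1$.
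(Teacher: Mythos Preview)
Your argument is correct and essentially identical to the paper's proof: both construct the supersolution $(\bar u,\bar v,\bar h)=(Ke^{-\delta t}\phi_1,\,Ke^{-\delta t}\phi_2,\,h_0+(h_1-h_0)(1-e^{-\delta t}))$ with $h_1\in(h_0,\ell^*)$, $\delta\le -\lambda_p(h_1)$, and $K$ large, verify the PDE inequalities via the eigenvalue equation and the concavity estimate $H(s)\le H'(0)s$, and then bound the free-boundary integral to obtain the threshold on $\mu_1+\mu_2$. The only cosmetic difference is that the paper parameterizes $h_1=h_0(1+\ep)$ and uses $\|\phi\|_X=1$ directly in place of your constant $C_0$.
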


\begin{proof} Due to $h_0<\ell^*$, we have $\lambda_p(h_0)<0$. By the continuity of $\lambda_p(l)$, there exists a small $\ep>0$ such that $\lambda_p(h_0(1+\ep))<0$. For convenience, denote $h_1=h_0(1+\ep)$. Let $\phi=(\phi_1,\phi_2)$ be the positive eigenfunction of $\lambda_p(h_0(1+\ep))$ with $\|\phi\|_{X}=1$. Define \[\bar{h}(t)=h_0\big[1+\ep(1-{\rm e}^{-\delta t})\big], ~ \bar{u}(t,x)=M{\rm e}^{-\delta t}\phi_1, ~ \bar{v}=M{\rm e}^{-\delta t}\phi_2\]
with $0<\delta\le -\lambda_p(h_1)$ and $M$ large enough such that $M\phi_1(x)\ge u_0(x)$ and $M\phi_2(x)\ge v_0(x)$ for $x\in[0,h_1]$.

Direct calculations yield that, for $t>0$ and $x\in[0,\bar{h}(t)]$,
 \bess
 &&\bar{u}_t-d_1\int_0^{\bar{h}(t)}\!\!J_1(x-y)\bar{u}(t,y)\dy
+d_1\bar{u}+a\bar{u}-H(\bar{v})\\
 &\ge& M{\rm e}^{-\delta t}\left(-\delta\phi_1-d_1\int_0^{h_1}J_1(x-y)\phi_1(y)\dy+d_1\phi_1+a\phi_1-\frac{H(\bar{v})}{M{\rm e}^{-\delta t}}\right)\\
 &=&M{\rm e}^{-\delta t}\left(-\delta\phi_1-\lambda_p(h_1)\phi_1+H'(0)\phi_2-\frac{H(M{\rm e}^{-\delta t}\phi_2)}{M{\rm e}^{-\delta t}}\right)\\
 &\ge& M{\rm e}^{-\delta t}\left(-\delta-\lambda_p(h_1)\right)\phi_1\ge 0.
\eess
Similarly,
\[\bar{v}_t-d_2\int_0^{\bar{h}}J_2(x-y)\bar{v}(t,y)\dy
+d_2)\bar{v}+b\bar{v}-G(\bar{u})\ge0.\]
Moreover, when $\mu_1+\mu_2\le\frac{\ep\delta h_0}{Mh_1}$, we have
 \bess
&&\int_0^{\bar{h}(t)}\!\!\int_{\bar{h}(t)}^{\yy}\big[\mu_1J_1(x-y)\bar{u}(t,x)
+\mu_2J_2(x-y)\bar{v}(t,x)\big]\dy\dx\\
&=& M{\rm e}^{-\delta t}\sum_{i=1}^2\mu_i\int_0^{\bar{h}(t)}\!\!\int_{\bar{h}(t)}^{\yy}J_i(x-y)\phi_i(x)\dy\dx\\
&\le&(\mu_1+\mu_2)Mh_1{\rm e}^{-\delta t}\le\ep\delta h_0{\rm e}^{-\delta t}=\bar{h}'(t).
  \eess

By the comparison principle (Proposition \ref{p3.1}), we have $h(t)\le\bar{h}(t)$ for $t\ge0$, which implies
\[\lim_{t\to\yy}h(t)\le\lim_{t\to\yy}\bar h(t)=h_1<\yy.\]
Thus the proof is finished.
\end{proof}

The next result shows that if $h_0<\ell^*$, spreading can happen provided that one of the expanding rates $\mu_1$ and $\mu_2$ is large enough.

\begin{lemma}\label{l3.7}If $h_0<\ell^*$, then there exists a $\bar{\mu}_1>0$\, $(\bar{\mu}_2>0)$ which is independent of $\mu_2$ $(\mu_1)$ such that spreading happens when $\mu_1\ge\bar{\mu}_1$ $(\mu_2\ge\bar{\mu}_2)$.
\end{lemma}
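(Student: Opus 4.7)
The plan is a contradiction argument that combines monotonicity in $\mu_1$, the characterization of vanishing via $\lambda_p$, and a uniform lower bound on the Stefan increment via comparison with a fixed-boundary problem. Suppose the conclusion fails: by the monotonicity of $(u,v,h)$ in $\mu_1$ (Proposition~\ref{p3.1}) this means that vanishing happens for all $\mu_1>0$ (with $\mu_2>0$ fixed). Lemma~\ref{l3.1} together with the strict monotonicity of $\lambda_p(l)$ (Proposition~\ref{p2.1}) then forces $h_\infty(\mu_1)\le\ell^*$ for every such $\mu_1$. Setting $h^*:=\sup_{\mu_1>0}h_\infty(\mu_1)\le\ell^*$, integrating the free-boundary ODE over $[0,\infty)$ and discarding the nonnegative $\mu_2$-term gives
\[
\mu_1\int_0^\infty\!\!\int_0^{h(t)}\!\!\int_{h(t)}^\infty J_1(x-y)u(t,x)\,dy\,dx\,dt \ \le\ h_\infty(\mu_1)-h_0 \ \le\ h^*-h_0,
\]
so the triple integral on the left tends to $0$ as $\mu_1\to\infty$; crucially, the resulting bound does not involve $\mu_2$.

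To contradict this I would produce a positive lower bound on the same triple integral that is independent of $\mu_1$. The tool is comparison with the fixed-boundary problem \eqref{2.9} on $[0,h_0]$: since $h(t)\ge h_0$, the free-boundary solution is a supersolution of that problem, hence $u(t,x)\ge u^*(t,x)$ for all $t\ge0$ and $x\in[0,h_0]$, where $(u^*,v^*)$ depends only on the data (not on $\mu_1$ or $\mu_2$). By Proposition~\ref{p2.3}(1) and the nonlocal strong maximum principle, $u^*$ is continuous and strictly positive on $(0,\infty)\times[0,h_0)$. Fix $T_0>0$ and choose a small $\eta>0$ together with $\delta_0>0$ so that $u^*(t,x)\ge\delta_0$ on $[T_0/2,T_0]\times[h_0-2\eta,h_0-\eta]$ and, using \textbf{(J)} and continuity of $J_1$ at $0$, so that $J_1(z)\ge\delta_J>0$ for $|z|\le 4\eta$. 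If $h(t)\le h_0+\eta$ throughout $[T_0/2,T_0]$, then for $x\in[h_0-2\eta,h_0-\eta]$ and $y\in[h(t),h(t)+\eta]$ we have $|x-y|\le 4\eta$, so the triple integral exceeds $\tfrac12\delta_0\delta_J\eta^2T_0$, a positive constant independent of $\mu_1$, contradicting the previous paragraph.

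If instead $h(t)>h_0+\eta$ at some $t\in[T_0/2,T_0]$, let $\tau(\mu_1)\in(0,T_0]$ be the first time $h(\tau)=h_0+\eta$. I would iterate the scheme on $[0,h_0+\eta/2]$: for $t\ge\tau$, $h(t)\ge h_0+\eta>h_0+\eta/2$, so comparison with the fixed-boundary problem \eqref{2.9} on $[0,h_0+\eta/2]$ gives $u(\tau+\cdot,\cdot)\ge\tilde u^*$, where $\tilde u^*$ solves that fixed-boundary problem with initial data $u(\tau,\cdot)$. The latter is bounded below on compact subsets of $[0,h_0)$ by $\inf_{\tau'\in(0,T_0]}u^*(\tau',\cdot)$, a positive function that does not depend on $\mu_1$. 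Running the dichotomy of the previous paragraph in this enlarged configuration either contradicts the decay of the triple integral or forces $h_\infty(\mu_1)>h_0+3\eta/2$; after at most $\lceil 2(\ell^*-h_0)/\eta\rceil$ iterations, $h_\infty(\mu_1)>\ell^*$, again impossible. The symmetric case $\mu_2\ge\bar\mu_2$ follows by swapping the roles of $(u,J_1,\mu_1)$ and $(v,J_2,\mu_2)$.

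The principal difficulty I anticipate is the iteration step: at each stage the lower bound on $u$ over the freshly absorbed subinterval must not degenerate as $\mu_1\to\infty$. This is precisely why I track the hitting time $\tau(\mu_1)\le T_0$; its uniform upper bound is what lets me transport the positivity of the reference solution $u^*$ (at a time in $(0,T_0]$) into initial data for the next stage's comparison. Once this bootstrapping is established, each iteration mirrors the previous one and, because $\eta$ is fixed, the process terminates after finitely many rounds.
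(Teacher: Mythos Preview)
Your approach is correct in substance but follows a markedly different route from the paper. The paper's proof is essentially two lines: set $\mu_2=0$, let $(\underline u,\underline v,\underline h)$ solve \eqref{1.1} with $\mu_2=0$, observe by Proposition~\ref{p3.1} that $(\underline u,\underline v,\underline h)$ lies below $(u,v,h)$ for every $\mu_2>0$, and then invoke \cite[Theorem 1.3]{DN8} to produce $\bar\mu_1$ making $\underline h_\infty=\infty$. Independence from $\mu_2$ is automatic because the lower barrier never sees $\mu_2$. Your argument, by contrast, is self-contained: you integrate the free-boundary equation to force the triple integral below $(\ell^*-h_0)/\mu_1$, and you manufacture a positive $\mu_1$-free lower bound via comparison with a finite chain of fixed-domain problems, pushing the front past $\ell^*$ step by step. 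The advantage of your route is that it does not outsource the key step and yields an explicit threshold; the paper's reduction is considerably shorter.

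Two points deserve tightening. First, your opening contradiction hypothesis (``vanishing for all $\mu_1>0$ with $\mu_2>0$ fixed'') is not the negation of a statement uniform in $\mu_2$; the correct negation allows $\mu_2$ to vary with $\mu_1$. Fortunately this is harmless, because every constant you construct ($\delta_0$, $\delta_J$, $T_0$, and their iterates) depends only on the data $(J_i,d_i,a,b,H,G,u_0,v_0,h_0)$ and not on either $\mu_i$, so what you actually establish is the contrapositive ``if vanishing, then $\mu_1\le(\ell^*-h_0)/c$'' with $c>0$ uniform in $(\mu_1,\mu_2)$---which is precisely the required statement. Second, the iteration step is only sketched and hides a small subtlety: your bound $u(\tau,\cdot)\ge u^*(\tau,\cdot)$ degenerates at $x=h_0$ since $u_0(h_0)=0$ forces $\inf_{\tau'\in(0,T_0]}u^*(\tau',h_0)=0$. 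To launch the next comparison on $[0,h_0+\eta/2]$ with genuinely $\mu_1$-independent initial data, you should take a continuous minorant $(\hat u_0,\hat v_0)$ supported in a compact subinterval of $[0,h_0)$ and extended by zero to the enlarged interval; the resulting fixed-domain solution is then strictly positive on $(0,\infty)\times[0,h_0+\eta/2]$ by the strong maximum principle, and the dichotomy can be re-run with constants that again do not depend on $(\mu_1,\mu_2)$. Since the number of rounds is at most $\lceil 2(\ell^*-h_0)/\eta\rceil$, the minimum of the finitely many stage-constants is positive and the contradiction closes.
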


\begin{proof}We only prove the assertion about $\mu_1$ since the similar method can be adopt for the conclusion of $\mu_2$.
Let $(\underline{u},\underline{v},\underline{h})$ be the unique solution of \eqref{1.1} with $\mu_2=0$. Clearly, $(\underline{u},\underline{v},\underline{h})$ is a lower solution of \eqref{1.1} and Lemmas \ref{l3.1}-\ref{l3.6} hold for $(\underline{u},\underline{v},\underline{h})$. Then we can argue as in the proof of \cite[Theorem 1.3]{DN8} to deduce that there exists a $\ol{\mu}_1>0$ such that if $\mu_1\ge\ol{\mu}_1$, spreading happens for $(\underline{u},\underline{v},\underline{h})$ and also for the unique solution $(u,v,h)$ of \eqref{1.1}. The proof is finished.
\end{proof}

By Lemmas \ref{l3.6} and \ref{l3.7}, we have that vanishing occurs if $\mu_1+\mu_2\le \ud{\mu}$, while spreading happens if $\mu_1+\mu_2\ge\ol{\mu}_1+\ol{\mu}_2=:\ol\mu$. One naturally wonders whether these is a unique critical value of $\mu_1+\mu_2$ such that spreading happens if and only if $\mu_1+\mu_2$ is beyond this critical value. Indeed, such value does not exist since the unique solution $(u,v,h)$ of \eqref{1.1} is not monotone about $\mu_1+\mu_2$. However, for some special $(\mu_1, \mu_2)$ we can obtain a unique critical value as we wanted.

\begin{lemma}\label{l3.8} Assume $h_0<\ell^*$. If $\mu_2=f(\mu_1)$ where $f\in C([0,\yy))$, is strictly increasing, $f(0)=0$ and $\lim\limits_{s\to\yy}f(s)=\yy$. Then there is a unique $\mu^*_1>0$ such that spreading occurs if and only if $\mu_1>\mu^*_1$.
\end{lemma}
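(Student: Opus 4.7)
The plan is to identify $\mu_1^*:=\inf S$ as the sharp threshold, where
\[
S:=\{\mu_1>0 : \text{spreading occurs for }\eqref{1.1}\text{ with }\mu_2=f(\mu_1)\}.
\]
The argument rests on three ingredients: monotonicity of the solution in $\mu_1$ along the curve $\mu_2=f(\mu_1)$, existence of both behaviors at the two extremes of the parameter range, and a one-sided continuous-dependence argument that excludes $\mu_1^*$ itself from $S$.

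First I would combine the remark following Proposition \ref{p3.1}---the unique solution of \eqref{1.1} is non-decreasing in each of $\mu_1,\mu_2$ separately---with the strict monotonicity of $f$: if $\mu_1<\mu_1'$ then $f(\mu_1)<f(\mu_1')$, so $h^{\mu_1}(t)\le h^{\mu_1'}(t)$ for all $t\ge 0$ and hence $h^{\mu_1}_\yy$ is non-decreasing in $\mu_1$; in particular $S$ is upward-closed in $(0,\yy)$. For the endpoints, since $f(0)=0$ and $f\in C([0,\yy))$, one has $\mu_1+f(\mu_1)\to 0$ as $\mu_1\to 0^+$, so Lemma \ref{l3.6} forces vanishing for all sufficiently small $\mu_1>0$; while Lemma \ref{l3.7} produces $\bar\mu_1>0$, independent of $\mu_2$, such that spreading holds whenever $\mu_1\ge\bar\mu_1$. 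Consequently $S\neq\emptyset$ and $\mu_1^*:=\inf S\in(0,\bar\mu_1]$.

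The delicate step is to rule out $\mu_1^*\in S$; together with upward-closedness this yields $S=(\mu_1^*,\yy)$, which is the desired claim. Recall that by Lemma \ref{l3.1} vanishing at a parameter $\mu_1$ forces $\lambda_p(h^{\mu_1}_\yy)\le 0$, hence $h^{\mu_1}_\yy\le\ell^*$, so spreading is equivalent to the existence of some finite $T$ with $h^{\mu_1}(T)>\ell^*$. If $\mu_1^*\in S$, pick such a $T$ at $\mu_1^*$. By continuous dependence of the free-boundary solution on $(\mu_1,\mu_2)$ over the compact interval $[0,T]$, built into the fixed-point construction behind Theorem \ref{t1.1}, we obtain $h^{\mu_1}(T)\to h^{\mu_1^*}(T)$ as $\mu_1\to\mu_1^{*-}$. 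Hence $h^{\mu_1}(T)>\ell^*$ for some $\mu_1<\mu_1^*$, which forces spreading at that $\mu_1$ and contradicts $\mu_1^*=\inf S$.

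The main obstacle will be extracting the needed one-sided continuous dependence of $h^{\mu_1}(T)$ on $\mu_1$ over a fixed finite time interval in a usable form; all remaining ingredients---monotonicity in $(\mu_1,\mu_2)$, the dichotomy through $\lambda_p(h_\yy)$, and the extreme-$\mu_1$ behavior---are already supplied by Lemmas \ref{l3.1}, \ref{l3.6}, \ref{l3.7} and Proposition \ref{p3.1}.
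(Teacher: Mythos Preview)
Your proposal is correct and follows essentially the same route as the paper: monotonicity of $(u,v,h)$ in $\mu_1$ along the curve $\mu_2=f(\mu_1)$ via Proposition \ref{p3.1}, vanishing for small $\mu_1$ from Lemma \ref{l3.6}, spreading for large $\mu_1$ from Lemma \ref{l3.7}, and then a threshold argument. The paper does not spell out the last step but simply invokes \cite[Theorem 3.14]{CDLL}; your continuous-dependence argument to exclude $\mu_1^*$ from $S$ is exactly what lies behind that citation, so you have in fact supplied more detail than the paper does.
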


\begin{proof} Firstly, it is easy to see from a comparison argument that the unique solution $(u,v,h)$ is strictly increasing in $\mu_1$. We have known that vanishing happens when $\mu_1+f(\mu_1)\le\ud{\mu}$ (Lemma \ref{l3.6}), and spreading happens when $\mu_1+f(\mu_1)\ge\ol{\mu}$ (Lemma \ref{l3.7}). Due to the properties of $f$, there exist unique $\ud\mu_1$ and $\ol\mu_1>0$,  such that $\ud\mu_1+f(\ud\mu_1)=\ud{\mu}$ and $\ol\mu_1+f(\ol\mu_1)=\ol{\mu}$. Clearly, $\mu_1+f(\mu_1)\le\ud{\mu}$ is equivalent to $\mu_1\le\ud{\mu}_1$, and $\mu_1+f(\mu_1)\ge\ol{\mu}$ is equivalent to $\mu_1\ge\ol{\mu}_1$. So, vanishing happens if $\mu_1\le\ud{\mu}_1$, while spreading occurs if $\mu_1\ge\ol{\mu}_1$. Then we can use the monotonicity of $(u,v,h)$ on $\mu_1$ and argue as in the proof of \cite[Theorem 3.14]{CDLL} to finish the proof. The details are omitted here.
\end{proof}

Obviously, Theorem \ref{t1.3} can be obtained by Lemmas \ref{l3.3}-\ref{l3.8}.

\end{document}